\newcommand{\EE}{\mathbb{E}}
\newcommand{\GG}{\mathbb{G}}
\newcommand{\NN}{\mathbb{N}}
\newcommand{\ZZ}{\mathbb{Z}}
\newcommand{\PP}{\mathbb{P}}
\newcommand{\cB}{\mathcal{B}}
\newcommand{\cG}{\mathcal{G}}
\newcommand{\cM}{\mathcal{M}}
\newcommand{\cX}{\mathcal{X}}
\newcommand{\fa}{\mathfrak{a}}
\newcommand{\fb}{\mathfrak{b}}
\newcommand{\fe}{\mathfrak{e}}
\newcommand{\fg}{\mathfrak{g}}
\newcommand{\fK}{\mathfrak{K}}
\newcommand{\KK}{\mathbbm{k}}
\newcommand{\fm}{\mathfrak{m}}
\newcommand{\fn}{\mathfrak{n}}
\newcommand{\fs}{\mathfrak{s}}
\newcommand{\fsl}{\mathfrak{sl}}
\newcommand{\ft}{\mathfrak{t}}
\newcommand{\fu}{\mathfrak{u}}
\newcommand{\fv}{\mathfrak{v}}
\newcommand{\lra}{\longrightarrow}
\DeclareMathOperator{\ad}{ad}
\DeclareMathOperator{\Char}{char}
\DeclareMathOperator{\CJT}{\mathsf{CJT}}
\DeclareMathOperator{\msdeg}{\mathsf{deg}}
\DeclareMathOperator{\Gr}{Gr}
\DeclareMathOperator{\id}{id}
\DeclareMathOperator{\Jt}{Jt}
\DeclareMathOperator{\Lie}{Lie}
\DeclareMathOperator{\pr}{pr}
\DeclareMathOperator{\modd}{mod}
\DeclareMathOperator{\Rad}{Rad}
\DeclareMathOperator{\Max}{Max}
\DeclareMathOperator{\Nor}{Nor}
\DeclareMathOperator{\rk}{rk}
\numberwithin{equation}{section}
\newtheorem{Theorem}{Theorem}[section]
\newtheorem{Lemma}[Theorem]{Lemma}
\newtheorem{Corollary}[Theorem]{Corollary}
\newtheorem{Proposition}[Theorem]{Proposition}
\theoremstyle{Theorem}
\newtheorem*{thm*}{Theorem A}
\newtheorem*{thm**}{Theorem B}
\theoremstyle{remark}
\newtheorem*{Remarks}{Remarks}
\newtheorem*{Definition}{Definition}
\numberwithin{equation}{section}
\begin{document}

\title[Varieties and Endotrivial Modules]{Varieties of subalgebras and endotrivial modules}

\author[H. Chang and R. Farnsteiner]{Hao Chang${}^\ast$ \lowercase{and} Rolf Farnsteiner}
\address[H. Chang]{School of Mathematics and Statistics, and Key Laboratory of Nonlinear Analysis \& Applications (Ministry of Education), Central China Normal University, Wuhan 430079, P. R. China}
\email{chang@ccnu.edu.cn}
\thanks{${}^\ast$ Supported by the National Natural Science Foundation of China (No. 11801204) and the Fundamental Research Funds for the Central Universities (No. CCNU22QN002).}
\address[R. Farnsteiner]{Mathematisches Seminar, Christian-Albrechts-Universit\"at zu Kiel, Heinrich-Hecht-Platz 6, 24118 Kiel, Germany}
\email{rolf@math.uni-kiel.de}
\subjclass[2010]{Primary 17B50}


\makeatletter
\makeatother


\begin{abstract} Let $(\fg,[p])$ be a finite dimensional restricted Lie algebra over a perfect field $\KK$ of characteristic $p\!\ge \!3$. By combining methods from recent work of Benson-Carlson \cite{BC20} with those of
\cite{CF21,Fa17} we obtain a description of the endotrivial $(\fg,[p])$-modules in case the underlying Lie algebra $\fg$ is supersolvable. For such $\fg$ and algebraically closed $\KK$, this yields a classification of the
indecomposable $(\fg,[p])$-modules of constant Jordan type with one non-projective block. \end{abstract}

\maketitle

\section*{Introduction} \label{S:Intro}
Let $\KK$ be a perfect field of characteristic $p\!\ge\!3$, $(\fg,[p])$ be a finite dimensional restricted Lie algebra with restricted enveloping algebra $U_0(\fg)$. In this article we employ geometric invariants that can be attached
to certain $U_0(\fg)$-modules in conjunction with the connectedness of projective varieties of $p$-subalgebras of $\fg$ to determine the so-called endotrivial $U_0(\fg)$-modules in case the Lie algebra $\fg$ is supersolvable.

Roughly speaking, a restricted Lie algebra $(\fg,[p])$ is an ordinary Lie algebra $\fg$ that is equipped with a $p$-th power map $\fg \lra \fg \ ; \ x \mapsto x^{[p]}$ with properties analogous to those of taking $p$-th powers in an
associative algebra. This is reflected in the definition of the restricted enveloping algebra $U_0(\fg)\!:=\!U(\fg)/(x^p\!-\!x^{[p]} ; x \in \fg)$ as a (finite dimensional) quotient of the ordinary enveloping algebra $U(\fg)$ of $\fg$.
Being a cocommutative Hopf algebra, $U_0(\fg)$ shares important properties with group algebras of finite groups. In particular, the category of $U_0(\fg)$-modules affords a duality and tensor products.

Our paper partly builds on recent work by Benson and Carlson \cite{BC20}, where somewhat different albeit related geometric methods were employed. In \cite{BC20} vector bundles were considered, while our method
ultimately hinges on morphisms to Grassmannians \cite{Fa17}. One key ingredient common to both approaches is the analysis of the \textit{nullcone}
\[ V(\fg) := \{x \in \fg \ ; \ x^{[p]}\!=\!0\},\]
which is a conical, closed subspace of the full affine space $\fg$.

In the more general context of finite group schemes techniques involving generalizations of the projectivized space $\PP(V(\fg))$ were introduced that in our situation amount to studying $U_0(\fg)$-modules $M$ by means of
their restrictions $(M|_{U_0(\KK x)})_{\KK x \in \PP(V(\fg))}$, cf.\ \cite{FP05,CFP08}. As each such $U_0(\KK x) \subseteq U_0(\fg)$ is canonically isomorphic to the truncated polynomial ring $\KK[X]/(X^p)$, there is a
decomposition
\[ M|_{U_0(\KK x)} \cong \bigoplus_{i=1}^p a_i(\KK x)_M[i]  \ \  ; \ \ a_i(\KK x)_M \in \NN_0,\]
with $[i]\!:=\!U_0(\KK x)/U_0(\KK x)x^i$ being the, up to isomorphism, unique indecomposable $U_0(\KK x)$-module of dimension $i$. If all the maps $\KK x \mapsto a_i(\KK x)_M$ are constant, then $M$ is said to have
{\it constant Jordan type} and we write $\Jt(M)\!:=\!\bigoplus_{i=1}^p a_i(M)[i]$. These modules have been studied partly because of their connections to vector bundles on $\PP(V(\fg))$. In another direction, one can ask to
what an extent modules $M$ of constant Jordan type are determined by their invariants $\Jt(M)$. In this framework, the study of modules $M$ with one non-projective block, that is, with $\Jt(M)\!=\![i]\!\oplus\!a_p(M)[p]$ for
some $i \in \{1,\ldots,p\!-\!1\}$, is a natural starting point. If $\KK$ is algebraically closed, it follows from \cite{Be10} and \cite{CF19} that we usually have $i \in \{1,p\!-\!1\}$, see Theorem \ref{NP2} for a precise statement.
Thanks to \cite{CFP08}, the resulting modules then are precisely the so-called endotrivial modules, that were initially investigated in the modular representation theory finite groups.

Regarding the history and relevance of endotrivial modules, the reader is referred to the introduction of \cite{BC20} and the references cited therein. Background material concerning restricted Lie algebras can be found in
\cite{SF}. All vector spaces are assumed to be finite dimensional.

We briefly outline the contents of our article. Given $(\fg,[p])$, we consider the Grassmannian $\Gr_2(\fg)$ of $2$-planes of the $\KK$-vector space $\fg$ and its closed subset
\[ \EE(2,\fg) := \{\fe \in \Gr_2(\fg) \ ; \ [\fe,\fe]\!=\!(0)\!=\!\fe^{[p]}\}\]
of two-dimensional elementary abelian $p$-subalgebras, cf.\ \cite{CFP15}. These spaces enable us to discern properties that cannot be detected at the level of $\PP(V(\fg))\!=\!\EE(1,\fg)$.

Our first main result reads as follows:

\bigskip

\begin{thm*} Suppose that $\KK$ is algebraically closed. Let $(\fu,[p])$ be a unipotent restricted Lie algebra. Then $\EE(2,\fu)$ is connected. \end{thm*}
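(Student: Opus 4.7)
The plan is to argue by induction on $\dim_{\KK} \fu$; the base cases $\dim_{\KK} \fu \leq 2$ are immediate. For the inductive step, since $\fu$ is unipotent, its center $C(\fu)$ is nonzero and the restriction of $[p]$ to $C(\fu)$ is nilpotent; hence the subspace $\fc := C(\fu) \cap V(\fu)$ of central $p$-trivial elements (where $V(\fu) = \{x \in \fu \ ; \ x^{[p]} = 0\}$) is nonzero, and one fixes $z \in \fc \setminus \{0\}$. The crucial geometric observation is that for any $\fe \in \EE(2,\fu)$ with $z \notin \fe$, the sum $\fe + \KK z$ is a three-dimensional elementary abelian subalgebra (abelian since $z$ is central, $p$-trivial since so are $\fe$ and $z$), so that $\Gr_{2}(\fe + \KK z) \cong \PP^{2}$ is a connected subvariety of $\EE(2,\fu)$ containing $\fe$ together with the $\PP^{1}$ of planes through $z$. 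Consequently the locus $\EE(2,\fu)^{z} := \{\fe \in \EE(2,\fu) \ ; \ z \in \fe\}$ meets every connected component of $\EE(2,\fu)$, and the task reduces to proving connectedness of $\EE(2,\fu)^{z}$.

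When $\dim_{\KK} \fc \geq 2$ the conclusion is immediate: $\Gr_{2}(\fc) \subseteq \EE(2,\fu)$ is a nonempty connected Grassmannian, and for every $\fe \in \EE(2,\fu)$ the sum $\fe + \fc$ is elementary abelian (as $\fc$ centralizes $\fe$ and every element is $p$-trivial), whence the connected Grassmannian $\Gr_{2}(\fe + \fc)$ is contained in $\EE(2,\fu)$ and links $\fe$ to $\Gr_{2}(\fc)$.

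The delicate case is $\dim_{\KK} \fc = 1$, i.e.\ $\fc = \KK z$. Here the inductive hypothesis is applied to the strictly smaller unipotent restricted Lie algebra $\bar\fu := \fu / \KK z$. The assignment $\fe \mapsto \fe / \KK z$ identifies $\EE(2,\fu)^{z}$ with the projectivization $\PP(\bar V)$ of the closed cone $\bar V := \pi(V(\fu)) \subseteq \bar\fu$, where $\pi \colon \fu \to \bar\fu$ denotes the canonical projection ($\bar V$ is a well-defined closed cone because $V(\fu)$ is invariant under translation by $\KK z$, $z$ being central and $p$-trivial). One then exploits an incidence correspondence with the (by induction, connected) variety $\EE(2,\bar\fu)$: each $\bar\fe \in \EE(2,\bar\fu)$ whose preimage $\pi^{-1}(\bar\fe)$ is itself a three-dimensional elementary abelian subalgebra of $\fu$ contributes a $\PP^{1}$ of planes-through-$z$ inside $\EE(2,\fu)^{z}$, and connected families of such \emph{liftable} $\bar\fe$ in $\EE(2,\bar\fu)$ yield connected families in $\EE(2,\fu)^{z}$.

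The principal obstacle is precisely the liftability condition, namely that not every $\bar\fe \in \EE(2,\bar\fu)$ satisfies $[\pi^{-1}(\bar\fe), \pi^{-1}(\bar\fe)] = 0$ together with $\pi^{-1}(\bar\fe)^{[p]} = 0$. The technical heart of the argument lies in controlling the locus of liftable $\bar\fe$, and in showing that non-liftable planes can either be bypassed via further central reductions inside $\bar\fu$ or be bridged through elements of $\EE(2,\fu) \setminus \EE(2,\fu)^{z}$ by means of the three-dimensional $\PP^{2}$'s used in the reduction step; this is where the combinatorial-geometric techniques borrowed from \cite{BC20} and \cite{CF21,Fa17} will be essential.
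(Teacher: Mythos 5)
Your reduction steps are correct and, in fact, close to what the paper does: fixing a nonzero $z\in V(C(\fu))$, observing that for $\fe\in\EE(2,\fu)$ not containing $z$ the plane $\fa=\fe\oplus\KK z$ is a three-dimensional elementary abelian subalgebra whose Grassmannian $\Gr_2(\fa)\cong\PP^2$ connects $\fe$ to $\EE(2,\fu)_z$, and hence that it suffices to show $\EE(2,\fu)_z$ is connected. Your identification of $\EE(2,\fu)_z$ with $\PP(\bar V)$ for $\bar V=\pi(V(\fu))\subseteq\fu/\KK z$ is also correct, and the case $\dim_\KK\!\bigl(C(\fu)\cap V(\fu)\bigr)\geq 2$ is handled.

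The gap is precisely the part you flag as the ``technical heart'': when $C(\fu)\cap V(\fu)=\KK z$ you must show $\PP(\bar V)$ is connected, and the passage you propose through the inductive hypothesis on $\bar\fu=\fu/\KK z$ does not go through as stated. The problem is twofold. First, $\bar V$ need not coincide with $V(\bar\fu)$ --- the containment $\bar V\subseteq V(\bar\fu)$ can be strict, since $y^{[p]}\in\KK z$ does not imply $y^{[p]}=0$ --- so knowing $\EE(2,\bar\fu)$ is connected says nothing directly about $\PP(\bar V)$. Second, and this is the point you acknowledge, not every $\bar\fe\in\EE(2,\bar\fu)$ lifts to a three-dimensional elementary abelian subalgebra of $\fu$: the preimage $\pi^{-1}(\bar\fe)$ is a Heisenberg-type $p$-subalgebra whose bracket and $[p]$-map both take values in $\KK z$, and there is no reason either should vanish. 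You do not control the locus of liftable planes, nor show it is connected, nor show it dominates $\PP(\bar V)$. As written, this is an outline of a plan whose hardest step is deferred, not a proof.

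For comparison, the paper circumvents this lifting problem entirely. It takes a hypothetical minimal-dimensional counterexample $\fu$ and uses the covering $\EE(2,\fu)=\bigcup_{\fm}\EE(2,\fm)$ over maximal $p$-subalgebras $\fm$ (each a codimension-one $p$-ideal, by unipotence). Minimality forces two such ideals $\fm,\fn$ with $\EE(2,\fm)\cap\EE(2,\fn)=\emptyset$; from this one extracts that $\fm\cap\fn$ is cyclic and that $\dim_\KK\fu/C(\fu)\leq 3$. For $p\geq 5$, or when $\dim_\KK\fu/C(\fu)\leq 2$, a semilinearity argument (via \cite[(II.1.2)]{SF} and Jacobson's formula) shows $[p]$ is $p$-semilinear, so $V(\fu)$ is a linear subspace of dimension $\geq 2$, hence irreducible, and Lemma \ref{Connect1} gives a contradiction. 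This isolates $p=3$, where a direct computation of $V(\fu)$ from Jacobson's formula, using the explicit decomposition $\fu=\KK x\oplus\KK y\oplus(\fm\cap\fn)$, exhibits $V(\fu)$ as irreducible as well, again contradicting Lemma \ref{Connect1}. The decisive mechanism is thus not an incidence correspondence with $\EE(2,\fu/\KK z)$, but an explicit proof that $V(\fu)$ itself is irreducible in any putative minimal counterexample; to complete your proposal you would need to supply an argument of comparable force for the liftability locus, which you have not done.
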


\bigskip
\noindent
Here $(\fu,[p])$ is called \textit{unipotent} or \textit{$[p]$-nilpotent}, provided there exists $n \in \NN_0$ such that $x^{[p]^n}\!=\!0$ for all $x \in \fu$. The former terminology derives from the fact that $\fu$ corresponds
to a unipotent infinitesimal group scheme of height $1$. By Engel's Theorem unipotent restricted Lie algebras are nilpotent, and the example \cite[p.117f]{CF21} shows that Theorem A may fail in the latter context.
Moreover, the three dimensional unipotent Heisenberg algebra shows that the result may also fail in case $\Char(\KK)\!=\!2$. By contrast, early work of Carlson \cite{Ca84} and Friedlander-Parshall \cite{FP86} ensures
that the space $\EE(1,\fg)$ is connected for an arbitrary restricted Lie algebra $(\fg,[p])$.

For elementary abelian Lie algebras $\fe$, $U_0(\fe)$-modules $M$ of constant Jordan type induce morphisms on $\PP(\fe)$ that take values in certain Grassmannians $\Gr_{\rk(M)}(M)$. The common degree of the
describing homogeneous polynomials is called the degree $\deg(M)$ of $M$ (see \cite[\S 4.1]{Fa17}). Given a $U_0(\fg)$-module $M$ of constant Jordan type, the degree function
\[ \msdeg_M : \EE(2,\fg) \lra \NN_0 \ \ ; \ \ \fe \mapsto \deg(M|_{U_0(\fe)})\]
is locally constant (cf.\ \cite{CF21}) and thus constant whenever $\EE(2,\fg)$ is connected.

Let $\modd U_0(\fg)$ be the category of finite dimensional $U_0(\fg)$-modules and denote by $X(\fg)$ the group of \textit{characters} of $(\fg,[p])$, that is, the set of algebra homomorphisms $U_0(\fg) \lra \KK$. (This is just
the group of group-like elements of the dual Hopf algebra $U_0(\fg)^\ast$.) The elements of $X(\fg)$ are determined by their restrictions to  $\fg$, so that we may think of them as linear forms $\lambda : \fg \lra \KK$ such that
$\lambda([\fg,\fg])\!=\!(0)$ and $\lambda(x^{[p]})\!=\!\lambda(x)^p$ for all $x \in \fg$. Given $\lambda \in X(\fg)$, we let $\KK_\lambda$ be the one-dimensional $U_0(\fg)$-module defined by $\lambda$. By relating degree
functions to syzygy functions of endotrivial modules and combining Theorem A  with results from \cite{BC20}, we show that endotrivial modules over supersolvable restricted Lie algebras are stably isomorphic to Heller shifts
of one-dimensional modules:

\bigskip

\begin{thm**}  Let $(\fg,[p])$ be a supersolvable restricted Lie algebra. If $M \in \modd U_0(\fg)$ is endotrivial, then there exist $\lambda \in X(\fg)$ and $n \in \ZZ$ such that $M\cong
\Omega^n_{U_0(\fg)}(\KK_\lambda)\!\oplus\!({\rm proj.})$. \end{thm**}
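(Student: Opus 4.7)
The plan is to combine Theorem A with a Benson--Carlson style restriction-to-elementary-abelians argument, and to handle the semisimple part via the torus action. By Galois descent on the abelian group of stable isomorphism classes of endotrivial modules and on $X(\fg)$, I may assume $\KK$ is algebraically closed. Under this hypothesis, a supersolvable restricted Lie algebra admits a decomposition $\fg=\ft\ltimes\fu$, where $\ft$ is a torus and $\fu$ is the $[p]$-unipotent radical; correspondingly $U_0(\fg)\cong U_0(\fu)\#U_0(\ft)$ is a smash product in which $U_0(\ft)$ is semisimple.

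First, I would restrict $M$ to $U_0(\fu)$. Since restriction preserves endotriviality, $M|_\fu$ is endotrivial, and for every $\fe\in\EE(2,\fu)$ the further restriction $M|_{U_0(\fe)}$ is endotrivial over $U_0(\fe)\cong\KK[X,Y]/(X^p,Y^p)$, hence of the form $\Omega^{n(\fe)}(\KK)\oplus(\text{proj.})$ for a unique integer $n(\fe)$. The constant-Jordan-type and rank-variety techniques of \cite{Fa17,CF21} show that the function $\fe\mapsto n(\fe)$ is locally constant on $\EE(2,\fu)$. Theorem A now enters decisively: $\EE(2,\fu)$ is connected, so $n(\fe)=n$ is a global constant.

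After replacing $M$ with $\Omega^{-n}(M)$, the module $M|_\fu$ restricts trivially (modulo projectives) to every $\fe\in\EE(2,\fu)$. Invoking the detection machinery of \cite{CF21,BC20} for unipotent restricted Lie algebras then upgrades this pointwise stable triviality to a genuine stable isomorphism $M|_\fu\cong\KK\oplus(\text{proj.})$ in $\modd U_0(\fu)$.

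Finally, since $U_0(\ft)$ is semisimple, $M$ decomposes $\ft$-equivariantly into weight spaces. An equivariant refinement of the previous step isolates a one-dimensional $\ft$-stable summand (up to projectives) of weight $\lambda\in X(\ft)=X(\fg)$, and reassembling the pieces produces the desired isomorphism $M\cong\Omega^n_{U_0(\fg)}(\KK_\lambda)\oplus(\text{proj.})$ in $\modd U_0(\fg)$. I expect the principal technical obstacle to lie in the globalization step -- passing from pointwise stable triviality on all of $\EE(2,\fu)$ to an honest stable isomorphism over $U_0(\fu)$ -- together with the task of arranging this isomorphism compatibly with the $\ft$-action, so that the character $\lambda$ emerges canonically.
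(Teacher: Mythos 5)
The opening reduction to algebraically closed $\KK$ via Galois descent and Noether--Deuring, and the subsequent use of Theorem A to force constancy of the syzygy function on $\EE(2,\fu)$, both match the paper. However, the step where you claim that ``a supersolvable restricted Lie algebra admits a decomposition $\fg=\ft\ltimes\fu$, where $\ft$ is a torus and $\fu$ is the $[p]$-unipotent radical'' is false in this generality, and this is where the real content of the supersolvable case lives. That decomposition is available for \emph{trigonalizable} restricted Lie algebras (those whose simple modules are all one-dimensional), but supersolvability is strictly weaker. A concrete obstruction: take the Heisenberg algebra $\fg=\KK x\oplus\KK y\oplus\KK z$ with $[x,y]=z$ central, $x^{[p]}=y^{[p]}=0$, $z^{[p]}=z$. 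This $\fg$ is nilpotent (hence supersolvable), but it has no nonzero unipotent $p$-ideal, so no semidirect decomposition $\ft\ltimes\fu$ exists; and indeed it is not trigonalizable, since the blocks on which $z$ acts by a nonzero scalar have a unique simple module of dimension $p$ (the Weyl-algebra picture). Your argument, as written, would already fail here because there is no $\fu$ to restrict to.

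The paper bridges this gap with a block-theoretic reduction that you have omitted. Given an endotrivial $M$, one considers the block $\cB$ of $U_0(\fg)$ containing $M$; for supersolvable $\fg$ all simples in $\cB$ share a common dimension that is a $p$-th power, and since $\dim_\KK M\equiv\pm1\pmod p$ that power must be $1$. Tensoring by the inverse of a composition factor $\KK_{-\lambda}$ moves $M$ into the principal block $\cB_0(\fg)$, which is isomorphic to $U_0(\fg/T(\fg))$ where $T(\fg)$ is the maximal torus of the center, and $\fg/T(\fg)$ \emph{is} trigonalizable. Only at that stage does the $\ft\ltimes\fu$ decomposition become available, and the rest of your argument (restrict to $\fu$, apply Theorem A and the syzygy-function constancy, invoke the Benson--Carlson detection result, then analyze the $\ft$-action) then proceeds essentially as in the paper. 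Your final globalization step is also more delicate than you indicate: the paper does not merely ``isolate a $\ft$-stable weight line up to projectives,'' but uses that an indecomposable $U_0(\ft\ltimes\fu)$-module restricts to an indecomposable $U_0(\fu)$-module, forcing $\Omega^{-n}(M)|_\fu\cong\KK$ on the nose and hence $\Omega^{-n}(M)\cong\KK_\lambda$. As written, your outline has a genuine gap in the trigonalizable-versus-supersolvable distinction, and this must be repaired before the argument can be completed.
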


\bigskip
\noindent
In case $(\fg,[p])$ is unipotent and $p\!\ge\!5$, Theorem B was proved in \cite{BC20} for arbitrary ground fields.\footnote{In \cite{BC20} unipotent restricted Lie algebras are being referred to as ``nilpotent''.}  In this
context, our proof of Theorem B also does not necessitate $\KK$ to be perfect. As observed in \cite{BC20}, Theorem B may already fail for unipotent Lie algebras in case $\Char(\KK)\!=\!2$. In case $\fg\!:=\!\Lie(G)$
is the Lie algebra of an algebraic group $G$, our result determines the structure of the restrictions of endotrivial $U_0(\fg)$-modules to Borel subalgebras of $\fg$.

In the final section, we show that modules $M$ of constant Jordan type $\Jt(M)\!=\![i]\!\oplus\!a_p(M)[p]$ with $i \in \{2,\ldots,p\!-\!2\}$ only occur for certain algebras $(\fg,[p])$, whose representation theory is well understood.
For supersolvable $\fg$, this yields the classification of the $U_0(\fg)$-modules of constant Jordan type with one non-projective block.

\bigskip

\section{Connectedness of $\EE(2,\fu)$} \label{S:Connect}
Throughout this section, $\KK$ is assumed to be algebraically closed and $(\fu,[p])$ denotes a unipotent restricted Lie algebra. If $\fu\!\ne\!(0)$, then the nullcone $V(C(\fu))$ of the center $C(\fu)$ of $\fu$ is not zero.
Given $z_0 \in V(C(\fu))\!\smallsetminus\!\{0\}$,
\[ \EE(2,\fu)_{z_0} := \{\fe \in \EE(2,\fu) \ ; \ z_0 \in \fe\}.\]
is a closed subset of $\EE(2,\fu)$.

\bigskip

\begin{Lemma} \label{Connect1} Suppose that $V(\fu)$ is irreducible and of dimension $\dim V(\fu)\!\ge\!2$. Then $\EE(2,\fu)$ is non-empty and connected. \end{Lemma}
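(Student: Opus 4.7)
The plan is to exploit the nonzero central $[p]$-nilpotent element $z_0 \in V(C(\fu)) \setminus \{0\}$ singled out just above the lemma: first to show that the slice $\EE(2,\fu)_{z_0}$ is non-empty and irreducible, and then to argue that every $\fe \in \EE(2,\fu)$ is joined to that slice through a projective plane of elementary abelian $2$-subalgebras, which will force $\EE(2,\fu)$ to be connected.

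First, for any $x \in V(\fu) \setminus \KK z_0$ the centrality of $z_0$ together with the Jacobson formula yields
\[
(\alpha z_0 + \beta x)^{[p]} \;=\; \alpha^p z_0^{[p]} + \beta^p x^{[p]} \;=\; 0,
\]
so $\KK z_0 + \KK x$ is a two-dimensional elementary abelian subalgebra containing $z_0$. Such an $x$ exists because $V(\fu)$ has dimension at least $2$ and is therefore not contained in the line $\KK z_0$. The assignment $x \mapsto \KK z_0 + \KK x$ then defines a surjective morphism
\[
\psi : V(\fu) \setminus \KK z_0 \lra \EE(2,\fu)_{z_0},
\]
the regularity being visible through the Pl\"ucker embedding $\KK z_0 + \KK x \mapsto [z_0 \wedge x]$. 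Since $V(\fu)$ is irreducible of dimension $\ge 2$, the dense open subset $V(\fu) \setminus \KK z_0$ is irreducible, and hence so is its continuous image $\EE(2,\fu)_{z_0}$; in particular this slice is non-empty and connected, which already settles non-emptiness of $\EE(2,\fu)$.

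Now let $\fe \in \EE(2,\fu)$ be arbitrary. If $z_0 \in \fe$, then $\fe \in \EE(2,\fu)_{z_0}$ and there is nothing to do. Otherwise $\fa := \fe + \KK z_0$ is three-dimensional, and centrality of $z_0$ together with $[\fe,\fe]=(0)=\fe^{[p]}$ and $z_0^{[p]}=0$ ensures, again via the Jacobson formula, that $\fa$ is an elementary abelian restricted subalgebra of dimension $3$. Hence every two-dimensional subspace of $\fa$ lies in $\EE(2,\fu)$, so the full Grassmannian $\Gr_2(\fa) \cong \PP^2$ embeds inside $\EE(2,\fu)$ as an irreducible (hence connected) closed subset containing both $\fe$ and the plane $\KK z_0 + \KK v$ for any $0 \ne v \in \fe$; the latter point belongs to $\EE(2,\fu)_{z_0}$. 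Consequently $\fe$ lies in the connected component of $\EE(2,\fu)_{z_0}$, and since $\fe$ was arbitrary, $\EE(2,\fu)$ is connected.

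The only subtle point worth flagging is the surjectivity of $\psi$: one must observe that any $\fe \in \EE(2,\fu)_{z_0}$ is automatically of the form $\KK z_0 + \KK x$ with $x \in V(\fu)$ (because $\fe^{[p]} = 0$), so that the identification of $\EE(2,\fu)_{z_0}$ with $\psi\bigl(V(\fu) \setminus \KK z_0\bigr)$ is tautological. Everything else reduces to the Jacobson formula, elementary linear algebra, and the general fact that a continuous image of an irreducible space is irreducible.
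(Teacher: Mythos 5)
Your proposal is correct and follows essentially the same route as the paper: pick a nonzero central $[p]$-nilpotent element $z_0$, show that the map $x \mapsto \KK z_0 \oplus \KK x$ from the irreducible set $V(\fu)\smallsetminus\KK z_0$ onto $\EE(2,\fu)_{z_0}$ is a morphism (via the Pl\"ucker embedding), deduce that $\EE(2,\fu)_{z_0}$ is irreducible and hence connected and non-empty, and then link any other $\fe \in \EE(2,\fu)$ to this slice through the irreducible $\Gr_2(\fe \oplus \KK z_0)$. The only cosmetic difference is that the paper passes to the projectivization $\PP(V(\fu)\smallsetminus\KK z_0)$ before taking the image, but the argument is identical in substance.
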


\begin{proof} Let $z_0 \in V(C(\fu))\!\smallsetminus\!\{0\}$. By assumption, $X\!:=\!V(\fu)\!\smallsetminus\!\KK z_0$ is a non-empty open and hence irreducible conical subvariety of $V(\fu)$. Consequently,
$\PP(X) \subseteq \PP(V(\fu))$ is a quasi-projective, irreducible variety.

The linear map $\fu \mapsto \bigwedge^2(\fu) \ ; \ x \mapsto x\wedge z_0$ defines a homogeneous morphism
\[ V(\fu) \lra \bigwedge^2(\fu)\]
of conical affine varieties. It thus follows from \cite[(1.3.1)]{Fa17} that the composite of
\[ \varphi :  \PP(X) \lra \EE(2,\fu)_{z_0} \ \ ; \ \ \KK x \mapsto \KK x\!\oplus\!\KK z_0\]
with the Pl\"ucker embedding $\Gr_2(\fu) \lra \PP(\bigwedge^2(\fu))$ is a morphism. Hence $\varphi$ is a surjective morphism, so that the projective variety $\EE(2,\fu)_{z_0}$ is irreducible.

Let $\cX_0$ be the connected component of $\EE(2,\fu)$ that contains $\EE(2,\fu)_{z_0}$. Given $\fe \in \EE(2,\fu)\!\smallsetminus\!\EE(2,\fu)_{z_0}$, we consider the
abelian three-dimensional $p$-subalgebra $\fa\!:=\!\fe\!\oplus\!\KK z_0$. Since $\EE(2,\fa)\!=\!\Gr_2(\fa)$ is irreducible and $\EE(2,\fa)\cap \EE(2,\fu)_{z_0}\!\ne\!\emptyset$, it follows that
$\EE(2,\fa) \subseteq \cX_0$, whence $\fe \in \cX_0$. As a result, $\EE(2,\fu)\!=\!\cX_0$ is connected.  \end{proof}

\bigskip
\noindent
A $p$-subalgebra $\fm \subseteq \fu$ is called {\it maximal}, if it is a maximal element (relative to the partial order given by inclusion $\subseteq$) of the set of all proper $p$-subalgebras of $\fu$. We let
\[\Max_p(\fu)\!:=\!\{\fm \subseteq \fu \ ; \ \fm \ \text{maximal $p$-subalgebra such that} \ \EE(2,\fm)\!\ne\!\emptyset\}.\]
Suppose that $\dim_\KK\fu\!\ge\!3$. Then every $\fe \in \EE(2,\fu)$ is contained in a maximal $p$-subalgebra, so that
\[ \EE(2,\fu) = \bigcup_{\fm \in \Max_p(\fu)} \EE(2,\fm).\]
Let $\fm \subsetneq \fu$ be a maximal $p$-subalgebra of $\fu$. Since $\fu$ is unipotent, Engel's Theorem implies that the normalizer $\Nor_{\fu}(\fm)$ of $\fm$ in $\fu$ is a $p$-subalgebra of $\fu$ that properly contains
$\fm$. By choice of $\fm$, we have $\Nor_{\fu}(\fm)\!=\!\fu$, so that $\fm$ is a $p$-ideal of  $\fu$. Thus, $\fu/\fm$ is a unipotent restricted Lie algebra that does not contain any proper $p$-subalgebras. Consequently,
$\dim_\KK\fu/\fm\!=\!1$. In particular, every $\fm \in \Max_p(\fu)$ is a $p$-ideal of codimension $1$.

Given $x \in \fu$, we let $(\KK x)_p\!:=\!\sum_{j \ge 0} \KK x^{[p]^j}$ be the smallest $p$-subalgebra of $\fu$ containing $x$. Such $p$-subalgebras, which are necessarily abelian, are referred to as \textit{cyclic}.

\bigskip

\begin{Lemma} \label{Connect2} Let $\fm,\fn \in \Max_p(\fu)$ be such that $\EE(2,\fm)\cap \EE(2,\fn)\!=\!\emptyset$. Then the following statements hold:
\begin{enumerate}
\item There are $x \in V(\fm)$ and $y \in V(\fn)$ such that $\KK x\!\oplus\!\fn\!=\!\fu\!=\! \KK y\!\oplus\!\fm$.
\item The $p$-ideal $\fm\cap \fn$ is cyclic.
\item $\dim_\KK( \fm\cap \fn)/(C(\fu)\cap\fm\cap\fn)\!\le\!1$.
\item $\dim_\KK \fm/(C(\fu)\cap \fm) \!\le \!2$.
\item $\dim_\KK \fu/C(\fu)\!\le\!3$.
\end{enumerate}\end{Lemma}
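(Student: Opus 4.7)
Put $\fa:=\fm\cap\fn$. A $2$-plane lies in both $\fm$ and $\fn$ if and only if it lies in $\fa$, so the hypothesis reads $\EE(2,\fa)=\emptyset$. Since $\EE(2,\fm)\ne\emptyset$ forces $\dim\fm\ge 2$ and $\fm$ has codimension $1$ in $\fu$, one has $\dim\fu\ge 3$ and $\dim\fa=\dim\fu-2\ge 1$. The plan begins with the ancillary statement that every unipotent $\fb\ne(0)$ with $\EE(2,\fb)=\emptyset$ satisfies $V(\fb)=\KK w$ for some $w\in C(\fb)$. Indeed, any two linearly independent elements of $V(C(\fb))$ commute, so by Jacobson's formula (all commutator terms vanish) they span a member of $\EE(2,\fb)$; hence $\dim V(C(\fb))\le 1$. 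Moreover $V(C(\fb))\ne(0)$, since $C(\fb)\ne(0)$ is abelian unipotent. If $x\in V(\fb)\setminus C(\fb)$, then the centralizer $C_\fb(x)$ of $x$ in $\fb$ contains $\KK x+C(\fb)$ in its center, so $\dim V(C(C_\fb(x)))\ge 2$, and the preceding case applied to $C_\fb(x)$ contradicts $\EE(2,C_\fb(x))\subseteq\EE(2,\fb)=\emptyset$. Hence $V(\fb)\subseteq C(\fb)$, and combined with $\dim V(C(\fb))\le 1$ and $V(\fb)\ne(0)$ this yields $V(\fb)=\KK w$. Applied to $\fa$, one obtains $V(\fa)=\KK w$ with $w\in C(\fa)$.

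\textbf{(1).} Since $V(\fm)$ contains any member of $\EE(2,\fm)$, $\dim V(\fm)\ge 2$. On the other hand $V(\fm)\cap\fn\subseteq V(\fm\cap\fn)=\KK w$ is at most $1$-dimensional, so $V(\fm)\not\subseteq\fn$. Any $x\in V(\fm)\setminus\fn$ yields $\fu=\KK x\oplus\fn$ by the codimension of $\fn$; symmetrically one produces $y\in V(\fn)\setminus\fm$ with $\fu=\KK y\oplus\fm$.

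\textbf{(2).} The proof of cyclicity breaks into three steps. \emph{Centrality of $w$ in $\fu$:} pick $\fe\in\EE(2,\fm)$. Since $\fe\not\subseteq\fn$ (else $\fe\in\EE(2,\fa)=\emptyset$), $\fe\cap\fn$ is $1$-dimensional and sits inside $V(\fm)\cap\fn\subseteq\KK w$, so $w\in\fe$. Choosing $x_0\in\fe$ linearly independent from $w$ gives $x_0\in V(\fm)\setminus\fn$ with $[x_0,w]=0$; a symmetric argument in $\fn$ produces $y_0\in V(\fn)\setminus\fm$ with $[y_0,w]=0$. As $\fu=\KK x_0+\KK y_0+\fa$ and $w$ commutes with each summand, $w\in C(\fu)$, so $V(\fa)\subseteq C(\fu)$. \emph{Abelianness of $\fa$:} this is the main obstacle. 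Arguing by contradiction, a non-commuting pair $a,b\in\fa$ generates a non-abelian sub-$p$-algebra $\fh\subseteq\fa$ still satisfying $V(\fh)=\KK w\subseteq C(\fh)$; a direct computation via the Jacobson formula (analyzing $(\alpha a+\beta b+c)^{[p]}$ for $c\in C(\fh)$) shows that the image of $[p]:\fh\to\fh$ is confined to $C(\fh)$ and is too small to cut $V(\fh)$ down to a single line, contradicting $V(\fh)=\KK w$. \emph{Cyclicity:} once $\fa$ is abelian, $[p]$ becomes a nilpotent $p$-semilinear operator on the $\KK$-vector space $\fa$ with $1$-dimensional kernel, and the structure theorem for finite-dimensional modules over the skew polynomial ring $\KK\{F\}$ (valid because $\KK$ is algebraically closed) forces $\fa=(\KK v)_p$ for some $v$.

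\textbf{(3)--(5).} Granting $\fa=(\KK v)_p$ abelian and the $p$-ideal status of $\fa$ in $\fu$: since $\fu/\fa$ is $2$-dimensional unipotent, it is abelian (the only non-abelian $2$-dimensional Lie algebra has a non-nilpotent $\ad$), so $\ad(v)(\fu)\subseteq\fa$, and $\ad(v)$ vanishes on $\fa$; hence $\ad(v)^2=0$ on $\fu$. For $p\ge 3$ one concludes $\ad(v^{[p]})=\ad(v)^p=0$, i.e.\ $v^{[p]}\in C(\fu)$; since $C(\fu)$ is $[p]$-stable, $v^{[p]^i}\in C(\fu)\cap\fa$ for all $i\ge 1$, and $\fa/(C(\fu)\cap\fa)$ is spanned by the image of $v$, proving (3). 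Then $\fm=\KK x+\fa$ yields $\dim\fm/(C(\fu)\cap\fm)\le 1+\dim\fa/(C(\fu)\cap\fa)\le 2$, which is (4), and $\fu=\KK y+\fm$ gives $\dim\fu/C(\fu)\le 1+\dim\fm/(C(\fu)\cap\fm)\le 3$, which is (5).
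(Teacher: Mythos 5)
Your overall architecture parallels the paper's, and several pieces are right: the ancillary lemma (that a nonzero unipotent $\fb$ with $\EE(2,\fb)=\emptyset$ has $V(\fb)=\KK w$ with $w$ central) is correct and slightly sharper than what the paper records; your (1) follows from it and is a valid, if less direct, route than the paper's observation that $V(\fm)\subseteq\fn$ would put any $\fe\in\EE(2,\fm)$ inside $\fn$; your (3)–(5) reproduce the paper's dimension count once (2) and (3) are in hand (and your explicit derivation of $v^{[p]}\in C(\fu)$ via $\ad(v)^2=0$ is exactly what the paper leaves implicit, though the appeal to abelianness of $\fu/\fa$ there is a red herring — $\ad(v)(\fu)\subseteq\fa$ just because $\fa$ is an ideal).

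The genuine gap is in (2). The paper obtains cyclicity of $\fm\cap\fn$ from $\dim V(\fm\cap\fn)\le 1$ by citing \cite[(4.3)]{Fa95}; you instead try to deduce it by first proving $\fa$ abelian and then invoking the structure of nilpotent $p$-semilinear operators over an algebraically closed field. The second half of that plan is fine, but the abelianness argument as written is not a proof. You reduce to a $p$-subalgebra $\fh\subseteq\fa$ generated by a non-commuting pair $a,b$, observe $V(\fh)=\KK w\subseteq C(\fh)$, and then assert that ``the image of $[p]$ is confined to $C(\fh)$ and is too small to cut $V(\fh)$ down to a line.'' Neither assertion is justified. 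The containment $\fh^{[p]}\subseteq C(\fh)$ is equivalent to $(\ad x)^p=0$ for every $x\in\fh$, i.e.\ every $\ad x$ is nilpotent of index $\le p$; that is automatic when $\fh$ has nilpotency class $<p$ (so that Jacobson's formula degenerates to semilinearity, where your dimension count does work, as in the Heisenberg case), but a $2$-generated unipotent $p$-subalgebra need not have small class, and restricting attention to elements of the shape $\alpha a+\beta b+c$ with $c\in C(\fh)$ does not exhaust $\fh$. As it stands, the key implication ``$\dim V(\fa)\le 1$ and $\fa$ unipotent $\Rightarrow$ $\fa$ cyclic'' is precisely the content of the cited result, and your sketch does not replace it. If you want to keep your route, either cite \cite[(4.3)]{Fa95} at this point, or supply a genuine inductive argument (e.g.\ pass to a minimal non-abelian counterexample and control its nilpotency class) rather than the present hand-wave.
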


\begin{proof} (1) Suppose that $V(\fm) \subseteq \fn$. If $\fe \in \EE(2,\fm)$, then $\fe \subseteq V(\fm) \subseteq \fn$, so that $\fe \in \EE(2,\fn)$, a contradiction. Since the $p$-ideals $\fm\!\ne\!\fn$ have
codimension $1$, the assertion follows.

(2),(3) Since $\fm\cap\fn$ is unipotent with $\EE(2,\fm\cap\fn)\!=\!\EE(2,\fm)\cap\EE(2,\fn)\!=\!\emptyset$, we have $\dim V(\fm\cap\fn)\!\le\!1$ and the $p$-ideal $\fm\cap\fn$ is cyclic (see for instance \cite[(4.3)]{Fa95}).
Consequently, $(\fm\cap\fn)^{[p]}\subseteq C(\fu)$, so that $C(\fu)\cap \fm\cap \fn$ has codimension $\le 1$ in $\fm\cap \fn$.

(4),(5) We have
\[ \dim_\KK\fm\!-\!\dim_\KK C(\fu)\cap \fm \le  \dim_\KK \fm\!-\!\dim_\KK C(\fu)\cap \fm\cap \fn = 1\!+\!\dim_\KK\fm\cap\fn\!-\!\dim_\KK C(\fu)\cap \fm\cap \fn \le 2.\]
Similarly,
\[ \dim_\KK\fu\!-\!\dim_\KK C(\fu) \le  \dim_\KK\fu\!-\!\dim_\KK C(\fu)\cap \fm = 1\!+\!\dim_\KK\fm\!-\!\dim_\KK C(\fu)\cap \fm \le 3,\]
as asserted. \end{proof}

\bigskip
\noindent
We let $(\fu^n)_{n \in \NN}$ be the lower central series of $\fu$, whose members are defined inductively via $\fu^1\!:=\!\fu$ and $\fu^{n+1}\!:=\![\fu,\fu^n]$ for all $n \in \NN$.

\bigskip

\begin{Corollary} \label{Connect3} Let $\fm,\fn \in \Max_p(\fu)$ be such that $\EE(2,\fm)\cap \EE(2,\fn)\!=\!\emptyset$. Then the following statements hold:
\begin{enumerate}
\item If $\dim_\KK \fu/C(\fu)\!\le\!2$, then $\EE(2,\fu)$ is connected.
\item If $p\!\ge\!5$, then $\EE(2,\fu)$ is connected. \end{enumerate} \end{Corollary}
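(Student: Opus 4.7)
The plan is to derive both (1) and (2) from Lemma \ref{Connect1} by showing that under each hypothesis the nullcone $V(\fu)$ is already a $\KK$-linear subspace of $\fu$. Since $\fm\in\Max_p(\fu)$ forces $\EE(2,\fu)\supseteq\EE(2,\fm)\neq\emptyset$, one automatically has $\dim V(\fu)\geq 2$, and a linear subspace is irreducible, so once linearity of $V(\fu)$ is established Lemma \ref{Connect1} finishes the argument.

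The tool for linearity is Jacobson's formula
\[(x+y)^{[p]} = x^{[p]} + y^{[p]} + \sum_{i=1}^{p-1}s_i(x,y),\]
in which each $s_i(x,y)$ is a sum of $p$-fold iterated commutators in $x$ and $y$. If the nilpotency class $c$ of $\fu$ satisfies $c\leq p-1$, then every $p$-fold commutator in $\fu$ vanishes, so the correction term disappears and $[p]$ becomes additive; combined with $(\alpha x)^{[p]} = \alpha^p x^{[p]}$ this realizes $[p]$ as a $p$-semilinear endomorphism of $\fu$, whose kernel $V(\fu)$ is then automatically a $\KK$-subspace.

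It remains to bound $c$ in each case. In (1), the hypothesis $\dim_\KK \fu/C(\fu)\leq 2$ and unipotence of the quotient force $\fu/C(\fu)$ to be abelian, since any $2$-dimensional nilpotent Lie algebra is abelian; consequently $[\fu,\fu]\subseteq C(\fu)$ and $[\fu,[\fu,\fu]] \subseteq [\fu,C(\fu)] = 0$, giving $c\leq 2\leq p-1$ (recall $p\geq 3$). In (2), Lemma \ref{Connect2}(5) supplies $\dim_\KK \fu/C(\fu)\leq 3$, and a $3$-dimensional unipotent restricted Lie algebra is either abelian or Heisenberg, hence of nilpotency class at most $2$; the same one-step lift via $[\fu,C(\fu)]=0$ yields $c\leq 3\leq p-1$ under $p\geq 5$. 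The main point requiring care is precisely this lift from the class bound on $\fu/C(\fu)$ to that on $\fu$, but it is immediate from $[\fu,C(\fu)]=0$; after that, both parts follow from the Jacobson computation above together with Lemma \ref{Connect1}.
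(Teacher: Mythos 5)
Your proof is correct. Part (1) matches the paper's argument. For part (2), you take a genuinely more direct route: the paper splits $\fu = \KK y \oplus \fm$ via Lemma \ref{Connect2}(1), shows $\fs^4 = (0)$ for the subalgebra $\fs$ generated by a pair $b \in \KK y$, $c \in \fm$ (using Lemma \ref{Connect2}(3)), and separately verifies that $[p]|_\fm$ is semilinear via $\fm^3 = (0)$ (using Lemma \ref{Connect2}(4)), assembling semilinearity of $[p]$ on $\fu$ from these two pieces. You instead observe that $\dim_\KK \fu/C(\fu) \le 3$ (Lemma \ref{Connect2}(5)) already forces $\fu^4 = (0)$ outright — a nilpotent Lie algebra of dimension at most $3$ has class at most $2$, and $[\fu, C(\fu)] = 0$ lifts this bound by one step — so $[p]$ is $p$-semilinear globally once $p \ge 5$. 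This avoids the auxiliary subalgebra $\fs$ and the two-stage verification entirely; the paper's argument is more local but buys nothing extra here. Both routes then conclude that $V(\fu)$ is a linear subspace of dimension $\ge 2$ and close with Lemma \ref{Connect1}. One small imprecision in your wording: ``a $3$-dimensional unipotent restricted Lie algebra'' should read ``a unipotent Lie algebra of dimension at most $3$'' (the quotient $\fu/C(\fu)$ may well have dimension $<3$, and the restricted structure plays no role in the class bound), but the reasoning you give covers those cases as well, so the argument stands.
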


\begin{proof} Suppose that the $p$-map is semilinear, that is, $(\alpha x\!+\!y)^{[p]}\!=\!\alpha^px^{[p]}\!+\!y^{[p]}$ for all $x,y \in \fu$ and $\alpha \in \KK$. Then $V(\fu)$ is a subspace of dimension $\ge\!2$ and Lemma \ref{Connect1} shows that $\EE(2,\fu)$ is connected.

(1) By assumption, $\fu^2 \subseteq C(\fu)$, whence $\fu^3\!=\!(0)$. In view of \cite[(2.1.2)]{SF}, the map $[p]$ is semilinear.

(2)  Thanks to Lemma \ref{Connect2}(1), we have $\fu\!=\!ky\!\oplus \!\fm$ for some $y\in V(\fn)$. Given
\[ a = b \oplus c \in \KK y\!\oplus \fm, \ \ \ \ \ (b \in \KK y \subseteq \fn, c \in \fm),\]
Jacobson's formula \cite[\S 2.1]{SF} implies
\[ a^{[p]} = c^{[p]}\!+\!\sum_{i=1}^{p-1} s_i(b,c),\]
where the right-hand side belongs to $p$-th constituent $\fs^p$ of the lower central series of the subalgebra $\fs$ generated by $b$ and $c$. By choice of $b$ and $c$, we have $\fs^2\!=\!\sum_{i\ge2}(\KK b\oplus\!\KK c)^i
\subseteq \fm\cap \fn$, while Lemma \ref{Connect2}(3) implies $\fs^3 \subseteq C(\fu)$. Consequently, $\fs^4\!=\!(0)$. As $p\!\ge\!5$, \cite[(2.1.2)]{SF} yields
\[ a^{[p]} = c^{[p]}.\]
Lemma \ref{Connect2}(4) ensures that $\fm/(C(\fu)\cap \fm)$ is abelian, so that we have $\fm^2 \subseteq C(\fu)$ and $\fm^3\!=\!(0)$. Consequently, $[p]|_\fm$ is semilinear. In sum, we have shown that the $p$-map is
semilinear, so that $\EE(2,\fu)$ is connected. \end{proof}

\bigskip

\begin{Lemma} \label{Connect4} Suppose that $\fu$ has minimal dimension subject to $\EE(2,\fu)$ being disconnected. Then the following statements hold:
\begin{enumerate}
\item There exist $\fm,\fn \in \Max_p(\fu)$ such that $\EE(2,\fm)\cap\EE(2,\fn)\!=\!\emptyset$.
\item We have $C(\fu)\!=\!(\fm\cap\fn)^{[p]}$. In particular, $C(\fu)$ is cyclic. \end{enumerate} \end{Lemma}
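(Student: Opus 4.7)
The plan is to deduce (1) directly from the minimality hypothesis together with the covering
\[ \EE(2,\fu) = \bigcup_{\fm \in \Max_p(\fu)} \EE(2,\fm), \]
and then to extract (2) by pinning down $\dim_\KK \fu/C(\fu) = 3$ via Lemma \ref{Connect2}(5) and Corollary \ref{Connect3}(1), followed by a dimension count that exploits the cyclicity of $\fm \cap \fn$.

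For (1), I observe that each $\fm \in \Max_p(\fu)$ is a proper $p$-subalgebra with $\EE(2,\fm) \neq \emptyset$, so the minimal choice of $\fu$ forces $\EE(2,\fm)$ to be connected. If $\EE(2,\fm) \cap \EE(2,\fn)$ were nonempty for every pair $\fm, \fn \in \Max_p(\fu)$, the displayed covering would present $\EE(2,\fu)$ as a union of connected sets with pairwise nonempty intersections, hence itself connected --- contradicting the disconnectedness assumption.

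For (2), I would fix $\fm, \fn$ as in (1). The contrapositive of Corollary \ref{Connect3}(1) rules out $\dim_\KK \fu/C(\fu) \leq 2$, while Lemma \ref{Connect2}(5) supplies the reverse bound, giving $\dim_\KK \fu/C(\fu) = 3$. Since $\fu$ is nonzero and nilpotent, Engel's theorem yields $C(\fu) \neq 0$, whence $\dim_\KK \fu \geq 4$ and $\dim_\KK(\fm \cap \fn) = \dim_\KK \fu - 2 \geq 2$. Writing $\fm \cap \fn = (\KK a)_p$ by Lemma \ref{Connect2}(2), the dimension bound forces $a^{[p]} \neq 0$, so that $(\fm \cap \fn)^{[p]} = (\KK a^{[p]})_p$ has codimension $1$ in $\fm \cap \fn$, i.e.\ dimension $\dim_\KK \fu - 3 = \dim_\KK C(\fu)$. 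Combined with the inclusion $(\fm \cap \fn)^{[p]} \subseteq C(\fu)$ recorded in the proof of Lemma \ref{Connect2}, this forces the equality $C(\fu) = (\fm \cap \fn)^{[p]} = (\KK a^{[p]})_p$, which is visibly cyclic.

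I do not anticipate a serious obstacle: the essential qualitative step is the sandwiching that pins down $\dim_\KK \fu/C(\fu) = 3$, after which everything reduces to dimension bookkeeping. The mildly delicate point is confirming $\dim_\KK \fu \geq 4$, so that $a^{[p]} \neq 0$ and $(\fm \cap \fn)^{[p]}$ has the predicted codimension one inside $\fm \cap \fn$; once this is in hand, the matching of dimensions with $C(\fu)$ is automatic.
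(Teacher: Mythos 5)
Your proof is correct and follows essentially the same route as the paper. For (1) you argue exactly as the authors do: minimality gives connectedness of each $\EE(2,\fm)$, $\fm\in\Max_p(\fu)$, and pairwise nonempty intersections would force $\EE(2,\fu)$ to be connected via the covering. For (2) you sandwich $\dim_\KK\fu/C(\fu)=3$ using Lemma \ref{Connect2}(5) and Corollary \ref{Connect3}(1) and finish by a dimension count, just as the paper does; the only cosmetic difference is that the authors first observe $C(\fu)\subsetneq\fm\cap\fn$ via $\dim_\KK\fu/(C(\fu)\cap\fm\cap\fn)\le 3$, while you match $\dim_\KK(\fm\cap\fn)^{[p]}=\dim_\KK\fu-3=\dim_\KK C(\fu)$ directly after noting $\dim_\KK\fu\ge 4$. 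Both amount to the same bookkeeping.
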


\begin{proof} (1) By assumption, we have $\dim_\KK \fu\!\ge\!3$, so that $\EE(2,\fu)\!=\!\bigcup_{\fm \in \Max_p(\fu)}\EE(2,\fm)$.

Let $\fm \in \Max_p(\fu)$. By choice of $\fu$, the space $\EE(2,\fm)$ is connected. If $\EE(2,\fm)\cap\EE(2,\fn)\!\ne\!\emptyset$ for all $\fm,\fn \in \Max_p(\fu)$, then $\EE(2,\fu)$ is connected, a contradiction.

(2) In view of Lemma \ref{Connect2}(5) and Corollary \ref{Connect3}(1), we conclude that $\dim_\KK \fu/C(\fu)\!=\!3$. Moreover, Lemma \ref{Connect2}(3) yields $\dim_\KK\fu/(C(\fu)\cap \fm\cap\fn)\le 3$, whence
$C(\fu) \subseteq \fm\cap\fn$. Since $\dim_\KK\fu/(\fm\cap\fn)\!=\!2$, this inclusion is strict. On the other hand, $\fm\cap\fn$ being abelian forces $(\fm\cap\fn)^{[p]}\subseteq C(\fu)$, so that Lemma \ref{Connect2}(2)
yields $C(\fu)\!=\!(\fm\cap\fn)^{[p]}$. \end{proof}

\bigskip
\noindent
\textbf{Proof of Theorem A}:  We let $\fu$ be of minimal dimension subject to $\EE(2,\fu)$ being disconnected.

Lemma \ref{Connect4}(1) provides $\fm,\fn \in \Max_p(\fu)$ such that $\EE(2,\fm)\cap\EE(2,\fn)\!=\!\emptyset$. If $p\!\ge\!5$, then Corollary \ref{Connect3}(2) yields a contradiction, so that $p\!=\!3$.

Lemma \ref{Connect2}(1) provides $x \in V(\fm)$ and $y \in V(\fn)$ such that $\KK x\!\oplus\!\fn\!=\!\fu\!=\!\KK y\!\oplus\!\fm$. It follows that
\[ \fu = \KK x\!\oplus\!\KK y\!\oplus\!(\fm\cap\fn).\]
Then $z\!:=\![x,y] \in \fm\cap \fn$. If $z \in C(\fu)$, then Lemma \ref{Connect2}(3) yields $\fu^2\!=\!\KK z\!+\![x,\fm\cap\fn]\!+\![y,\fm\cap\fn]\!+\![\fm\cap\fn,\fm\cap\fn] \subseteq C(\fu)$, whence $\fu^3\!=\!(0)$.
Hence the map $[3]$ is semilinear and $\EE(2,\fu)$ is connected, a contradiction. Lemma \ref{Connect4}(2) yields $(\fm\cap \fn)^{[3]}\!=\!C(\fu)$, so that Lemma \ref{Connect2}(2) implies $\fm\cap \fn\!=\!(\KK z)_3$.

The remainder of the proof rests on an analysis of Jacobson's formula
\[ (a\!+\!b)^{[3]} = a^{[3]}\!+\!b^{[3]}\!+\!\sum_{i=1}^2 s_i(a,b),\]
see \cite[\S 2.1]{SF}. Let $T$ be an indeterminate over $\KK$ and consider the restricted Lie algebra $\fu\!\otimes_\KK\!\KK[T]$. Recall that
\[ \ad (\alpha x\otimes T\!+\!\beta y\otimes1)^2(\alpha x\otimes 1) = \sum_{i=1}^2 is_i(\alpha x,\beta y)\otimes T^{i-1},\]
while
\[ \ad (\alpha x\otimes T\!+\!\beta y\otimes1)^2(\alpha x\otimes 1) = -\ad (\alpha x\otimes T\!+\!\beta y\otimes1)(\alpha\beta z\otimes 1) = -\alpha^2\beta [x,z]\otimes T\!-\!\alpha\beta^2[y,z]\!\otimes 1,\]
so that $s_1(\alpha x,\beta y)\!=\!-\alpha\beta^2[y,z]$ and $s_2(\alpha x,\beta y)\!=\!\alpha^2\beta[x,z]$. This yields
\[ (\alpha x\!+\!\beta y)^{[3]} = \alpha^2\beta[x,z]\!-\!\alpha\beta^2[y,z] \ \ \ \ \ \forall \ \alpha,\beta \in \KK.\]
Given $u \in \fm\cap\fn$, Lemma \ref{Connect2}(3) implies $s_i(\alpha x\!+\!\beta y,u)\!=\!0$ for $1\!\le\!i\!\le\!2$, so that
\[ (\ast) \ \ \ \ \ \ \ \ (\alpha x\!+\!\beta y\!+\!u)^{[3]} = \alpha^2\beta[x,z]\!-\!\alpha\beta^2[y,z]\!+\!u^{[3]} \ \ \ \ \ \forall \ \alpha,\beta \in \KK, u \in \fm\cap\fn.\]
Let $\fm\cap\fn\!=\!\bigoplus_{i=0}^{n+1}\KK z^{[3]^i}$, where $z^{[3]^{n+1}}\!\ne\!0\!=\!z^{[3]^{n+2}}$. In view of Lemma \ref{Connect4}(2), the map
\[\bigoplus_{i=0}^{n}\KK z^{[3]^i} \lra C(\fu) \  \   ;  \  \  a  \mapsto a^{[3]}\]
is bijective. By Lemma \ref{Connect2}(3), we have $[x,z],[y,z] \in C(\fu)$, and we can thus find unique elements $a,b \in \bigoplus_{i=0}^{n}\KK z^{[3]^i}$ such that
\[ a^{[3]} = [x,z] \  \  \text{and} \ \ b^{[3]} = [y,z].\]
Let $u \in V(\fu)$. Then there are $\alpha,\beta,\epsilon \in \KK$ and $s \in \bigoplus_{i=0}^{n}\KK z^{[3]^i}$ such that
\[ u = \alpha x\!+\!\beta y\!+\!s\!+\!\epsilon z_0,\]
where $z_0\!:=\!z^{[3]^{n+1}}$ and $V(C(\fu))\!=\!\KK z_0$. Since $u \in V(\fu)$, identity ($\ast$) yields
\[ 0 = (\alpha x\!+\!\beta y\!+\!s\!+\!\epsilon z_0)^{[3]} = \alpha^2\beta[x,z]\!-\!\alpha\beta^2[y,z]\!+\!s^{[3]}.\]
As $\alpha^2\beta[x,z]\!-\!\alpha\beta^2[y,z]\in C(\fu)$, this implies the existence of $\gamma,\delta \in \KK$ such that
\[ s=\!\gamma a\!+\!\delta b.\]
Hence
\[ 0 = (\alpha^2\beta\!+\gamma^3)[x,z]\!+\!(\delta^3\!-\!\alpha\beta^2)[y,z].\]
It follows that
\[ (\ast\ast) \ \ \ \ \ \ \ V(\fu) = \{ \alpha x\!+\!\beta y\!+\!\gamma a\!+\!\delta b\!+\!\epsilon z_0 \ ; \ (\alpha^2\beta\!+\gamma^3)[x,z]\!+\!(\delta^3\!-\!\alpha\beta^2)[y,z]\!=\!0\}.\]
Two cases arise:

\medskip
(a) {\it $[x,z]$ and $[y,z]$ are linearly independent}.

\smallskip
\noindent
Let
\[X := \{\!\alpha x\!+\!\beta y\!+\!\gamma a\!+\!\delta b\in \fu \ ; \ \alpha^2\beta\!+\!\gamma^3\!=\! 0, \delta^3\!-\!\alpha\beta^2\!=\!0\}\]
and
\[Y := \{\alpha x\!+\!\beta y\!+\!\gamma a \in \fu \ ; \ \alpha^2\beta\!+\!\gamma^3\!=\! 0\}.\]
Then $X,Y$ are conical closed subsets of $\fu$. In view of ($\ast\ast$), we have an isomorphism $X\!\times\!\KK z_0 \stackrel{\sim}{\lra} V(\fu)$. The canonical projection induces a surjective morphism
\[ \pr : \PP(X) \lra \PP(Y) \ \ ; \ \  \KK(\alpha x\!+\!\beta y\!+\!\gamma a\!+\!\delta b) \mapsto \KK(\alpha x\!+\!\beta y\!+\!\gamma a)\]
such that $\pr^{-1}(\fv)$ is a singleton for all $\fv \in \PP(Y)$. Since $Y$ is irreducible, so is $\PP(Y)$, and it follows from \cite[Chap.I,\S6,Thm.8]{Sh1} that $\PP(X)$ is irreducible (of dimension $\dim \PP(X)\!=\!1$).
Since each irreducible component of $X$ is conical, it follows that $X$ is irreducible. Thus, $V(\fu)$ is irreducible as well.

\medskip
(b) {\it There is $\omega_0 \in \KK$ such that $[y,z]\!=\!\omega_0[x,z]$}.\footnote{This includes the example of \cite[\S 7]{BC20} as a special case.}

\smallskip
\noindent
Then we have $b \in \KK a$ and, for $u \in V(\fu)$, we write
\[ u = \alpha x\!+\!\beta y\!+\!\gamma a\!+\!\epsilon z_0.\]
We obtain
\[ 0 = u^{[3]} = (\alpha^2\beta\!+\!\gamma^3)[x,z]\!-\!\alpha\beta^2 \omega_0[x,z] = (\alpha^2\beta\!-\!\alpha\beta^2\omega_0\!+\gamma^3)[x,z].\]
Let
\[X := \{\alpha x\!+\!\beta y\!+\!\gamma a \in \fu \ ; \ \alpha^2\beta\!-\alpha\beta^2\omega_0+\!\gamma^3\!=\!0\}\]
Then $X$ is a conical, closed, irreducible subset of $V(\fu)$, and, if $[x,z]\!\ne\!0$, there is a canonical isomorphism $X\!\times\!\KK z_0 \stackrel{\sim}{\lra} V(\fu)$. Hence $V(\fu)$ is irreducible. This also holds
for $[x,z]\!=\!0$.

\medskip
\noindent
In either case, Lemma \ref{Connect1} provides a contradiction, thereby proving our theorem. \hfill $\square$

\bigskip

\section{Syzygy functions for endotrivial modules} \label{S:Syz}
Let $(\fg,[p])$ a restricted Lie algebra with restricted universal enveloping algebra $U_0(\fg)$. Given $M \in \modd U_0(\fg)$, and $n \in \ZZ$, we consider the $n$-th Heller shift $\Omega^n_{U_0(\fg)}(M)$, which is
computable from a minimal projective resolution (for $n\!\ge\!0$) or a minimal injective resolution of $M$ (for $n\!\le\!0$). We refer the reader to \cite[\S IV.3]{ARS95} for basic properties of $\Omega^n_{U_0(\fg)}$.

If $\fe$ is two-dimensional elementary abelian Lie algebra, i.e., $[\fe,\fe]\!=\!(0)$ and $[p]\!=\!0$, then \cite[(6.9)]{CFP08} yields
\[ (\star) \ \ \ \ \ \dim_\KK\Omega_{U_0(\fe)}^{2n}(\KK)=np^2\!+\!1 \ \ ;  \ \ \dim_\KK\Omega_{U_0(\fe)}^{2n-1}(\KK)=np^2\!-\!1.\]
Recall that $M \in \modd U_0(\fg)$ is referred to as {\it endotrivial}, provided
\[ M\!\otimes_\KK\!M^\ast \cong \KK \!\oplus\!({\rm proj.}).\]
In other words, the $U_0(\fg)$-module $M\!\otimes_\KK\!M^\ast$ is stably equivalent to the trivial module module $\KK\!=\!\KK_\varepsilon$, with $\varepsilon \in X(\fg)$ being the co-unit of the Hopf algebra $U_0(\fg)$.
Let $M$ be an endotrivial $U_0(\fg)$-module. If $\fe\in\EE(2,\fg)$ is a two-dimensional elementary abelian $p$-subalgebra, then the restriction of $M|_{U_0(\fe)}$ of $M$ to $U_0(\fe)$ is also endotrivial. Thanks to
\cite[(4.1)]{BC20} and ($\star$), there exists a unique integer $s_M(\fe)\in\ZZ$ such that
\[M|_{U_0(\fe)} \cong\Omega_{U_0(\fe)}^{s_M(\fe)}(\KK)\oplus (\rm proj.).\]

\begin{Definition} Let $(\fg,[p])$ be a restricted Lie algebra such that $\EE(2,\fg)\neq\emptyset$, $M$ be an endotrivial $U_0(\fg)$-module. Then
\[s_M : \EE(2,\fg)\lra \ZZ \]
is called the \textit{syzygy function} of $M$. \end{Definition}

\bigskip
\noindent
Using this notion we record the following consequence of \cite[(4.1),(4.2)]{BC20}:

\bigskip

\begin{Proposition} \label{Syz1} Let $\KK$ be algebraically closed. Suppose that $(\fu,[p])$ is unipotent, and let $z_0 \in V(C(\fu))\!\smallsetminus\!\{0\}$. Let $M$ be an endotrivial $U_0(\fu)$-module. If $\EE(2,\fu)_{z_0}\!=\!
\emptyset$\footnote{In this case, $\dim V(\fu)\!\le\!1$, so that $\fu$ is cyclic,} or $s_M : \EE(2,\fu)_{z_0} \lra \ZZ$ is constant, then there is $n \in \NN$ such that $M\cong \Omega^n_{U_0(\fu)}(\KK)\!\oplus\!({\rm proj.}).$
\end{Proposition}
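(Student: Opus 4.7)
The plan is to dispatch the two alternatives in the hypothesis separately, in each case exploiting the centrality of $z_0$.

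Suppose first that $\EE(2,\fu)_{z_0}=\emptyset$. For any $x\in V(\fu)$ not proportional to $z_0$, the $2$-plane $\KK x\!\oplus\!\KK z_0$ is automatically an elementary abelian $p$-subalgebra (we have $[x,z_0]=0$ since $z_0\in C(\fu)$, and $x^{[p]}=0=z_0^{[p]}$), and it would sit in $\EE(2,\fu)_{z_0}$. Hence the emptiness forces $V(\fu)=\KK z_0$, so $\dim V(\fu)\le 1$, and by \cite[(4.3)]{Fa95} the unipotent algebra $\fu$ must be cyclic. Then $U_0(\fu)\cong\KK[x]/(x^{p^m})$ is a local uniserial algebra, every finite-dimensional indecomposable non-projective module is a Heller translate of $\KK$, and the claim $M\cong\Omega^n_{U_0(\fu)}(\KK)\oplus(\mathrm{proj.})$ follows immediately. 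This justifies the footnote and settles the first case.

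Now assume $s_M$ is constant on $\EE(2,\fu)_{z_0}$, say with value $s$. I would set $N:=\Omega^{-s}_{U_0(\fu)}(M)$, which is again endotrivial. Because $U_0(\fe)\hookrightarrow U_0(\fu)$ is a sub-Hopf algebra and $U_0(\fu)$ is free as a right $U_0(\fe)$-module, a minimal projective resolution of $M$ over $U_0(\fu)$ restricts to a (generally non-minimal) projective resolution of $M|_\fe$; consequently $N|_\fe\cong\Omega^{-s}_{U_0(\fe)}(M|_\fe)\oplus(\mathrm{proj.})$. Combined with the definition of $s_M(\fe)$ via $(\star)$, this gives $N|_\fe\cong\KK\oplus(\mathrm{proj.})$, i.e.\ $s_N\equiv 0$ on $\EE(2,\fu)_{z_0}$. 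After this reduction it suffices to show that an endotrivial module $N$ with $s_N\equiv 0$ on $\EE(2,\fu)_{z_0}$ is stably trivial, at which point I would quote the detection/gluing statement \cite[(4.2)]{BC20}: it is exactly crafted to lift "trivial restriction to every $\fe$ through the central nilpotent element $z_0$" to global stable triviality. This yields $N\cong\KK\oplus(\mathrm{proj.})$, hence $M\cong\Omega^s_{U_0(\fu)}(\KK)\oplus(\mathrm{proj.})$, as required.

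The main obstacle is the final implication, since it cannot be deduced from a naive rank-variety argument: the support of an endotrivial module is all of $V(\fu)$, so one cannot hope to see stable triviality of $N$ from the size of its variety. The subtlety is that $\EE(2,\fu)_{z_0}$ is a \emph{pencil} of planes through the single central line $\KK z_0$, and one needs a Dade-type statement that records enough of the stable Jordan-type data along this pencil to pin down the stable class of $N$; this is precisely the content of \cite[(4.2)]{BC20}. The geometric input that makes this pencil rich enough is exactly the observation used above in the empty case: every line $\KK x\subseteq V(\fu)$ pairs with $\KK z_0$ to yield an element of $\EE(2,\fu)_{z_0}$, so the family in question sweeps out the entire nullcone $V(\fu)$.
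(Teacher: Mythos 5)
The paper itself gives no proof of this proposition; it is stated as a direct consequence of \cite[(4.1),(4.2)]{BC20}, so what you have done is reconstruct the intended reasoning, and your reconstruction is consistent with the paper's citation. The emptiness case reduces to the cyclic situation (exactly what the paper's footnote asserts, and the appeal to \cite[(4.3)]{Fa95} for ``$\dim V(\fu)\le 1$ implies cyclic'' is the same citation the paper uses elsewhere); the constant-syzygy case is handled by a Heller shift followed by the detection theorem from \cite[(4.2)]{BC20}, which is precisely the ingredient the paper points to. Your remark on why restriction commutes with $\Omega^n$ up to projectives (freeness of $U_0(\fu)$ over $U_0(\fe)$) is also correct and is the standard step.

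There is, however, one genuine inaccuracy in your first case. You write that over $U_0(\fu)\cong\KK[x]/(x^{p^m})$ ``every finite-dimensional indecomposable non-projective module is a Heller translate of $\KK$.'' This is false as soon as $p^m>3$: the non-projective indecomposables are $\KK[x]/(x^i)$ for $1\le i<p^m$, whereas $\Omega$ is an involution on the stable category exchanging $\KK[x]/(x^i)$ and $\KK[x]/(x^{p^m-i})$, so the $\Omega$-orbit of $\KK$ consists only of $\KK$ and $\KK[x]/(x^{p^m-1})$. What saves your conclusion is the hypothesis you temporarily dropped: $M$ is endotrivial, so its non-projective indecomposable summand $M_0$ satisfies $M_0\otimes_\KK M_0^\ast\cong\KK\oplus({\rm proj.})$ over the local algebra $U_0(\fu)$, forcing $(\dim_\KK M_0)^2\equiv 1\ ({\rm mod}\ p^m)$ and hence $\dim_\KK M_0\in\{1,p^m-1\}$, i.e.\ $M_0\in\{\KK,\Omega_{U_0(\fu)}(\KK)\}$. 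You should invoke endotriviality at this point rather than the false general claim about indecomposables over a uniserial algebra.
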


\bigskip
\noindent
The remainder of this section addresses the question, when syzygy functions are constant. Suppose that $\KK$ is algebraically closed. We let $\CJT(\fg)$ be the full subcategory of $U_0(\fg)$-modules of constant Jordan
type.  Endotrivial modules are known to have constant Jordan type, cf.\ \cite{CFP08}. For $\fe \in \EE(2,\fg)$ and $N \in \CJT(\fe)$, we define the \textit{generic kernel} of $N$ via
\[ \fK(N) := \sum_{x \in \PP(\fe)} \ker x_N\]
and refer the reader to \cite[\S7]{CFS11} for more details (in particular \cite[(7.7)]{CFS11}).

\bigskip
\noindent
Given $M \in \CJT(\fg)$, there exists $\rk(M) \in \NN_0$ such that
\[ \rk(x_M) = \rk(M) \ \ \ \ \ \forall \ x \in V(\fg)\!\smallsetminus\!\{0\}.\]
Here $x_M : M \lra M$ is the left multiplication effected by $x \in V(\fg) \subseteq U_0(\fg)$.

In the following result, we recall those properties of the degree function $\msdeg_M$ that are relevant for our purposes: \footnote{The degree function $\msdeg_M$ is just the $1$-degree function of \cite{Fa17}.}

\begin{Proposition} \label{Syz2} Let $\KK$ be algebraically closed, $M \in \CJT(\fg)$, $\fe \in \EE(2,\fg)$.
\begin{enumerate}
\item We have $\msdeg_M(\fe)\!=\!\dim_\KK M/\fK(M|_{U_0(\fe)})$.
\item We have $M^\ast \in \CJT(\fg)$ and $\msdeg_M(\fe)\!+\!\msdeg_{M^\ast}(\fe)\!=\!\rk(M)$. \end{enumerate} \end{Proposition}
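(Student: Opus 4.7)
My plan is to derive both parts from the definition of the degree function in \cite[\S 4.1]{Fa17}. For $M \in \CR(\fg)$ with $r = \rk(M)$ and $d = \dim_\KK M$, the constancy of rank makes the map
\[
\mu_M : \PP(V(\fg)) \lra \Gr_{d-r}(M), \qquad [x] \longmapsto \ker(x_M),
\]
a morphism; for $\fe \in \EE(2,\fg)$ the $1$-degree function of \cite{Fa17} is defined as the Plücker degree of the restriction $\mu_M|_{\PP(\fe)} : \PP(\fe) \cong \PP^1 \to \Gr_{d-r}(M)$.

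To prove part (1), I would invoke the following standard fact about $\PP^1$-families of subspaces: the pull-back $\cK \hookrightarrow M \otimes_\KK \cO_{\PP^1}$ of the tautological subbundle along $\mu_M|_{\PP(\fe)}$ decomposes as $\cK \cong \bigoplus_i \cO_{\PP^1}(-a_i)$ with $a_i \ge 0$, and $\sum_i a_i$ equals both the Plücker degree of the classifying map and the codimension in $M$ of $\sum_{[x] \in \PP^1} \cK_{[x]}$. Since the fibre-wise sum here is exactly $\fK(M|_\fe)$, this gives $\msdeg_M(\fe) = \dim_\KK M/\fK(M|_\fe)$.

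For part (2), the $\CR$-invariance of duality follows from $x_{M^\ast} = -x_M^\top$ (since the antipode of $U_0(\fg)$ restricts to negation on $\fg$), which yields $\rk(x_{M^\ast}) = \rk(x_M)$ for $x \in V(\fg)$. For the sum formula, I would use $\ker(x_{M^\ast}) = \im(x_M)^\perp$ together with $\sum_i U_i^\perp = (\bigcap_i U_i)^\perp$ to obtain
\[
\fK(M^\ast|_\fe) \;=\; \Bigl(\bigcap_{[x] \in \PP(\fe)} \im(x_M)\Bigr)^{\!\perp},
\]
so by part (1), $\msdeg_{M^\ast}(\fe) = \dim_\KK \bigcap_{[x] \in \PP(\fe)} \im(x_M)$. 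The sum formula then reduces to the linear-algebraic identity
\[
\dim_\KK M/\fK(M|_\fe) \;+\; \dim_\KK \bigcap_{[x] \in \PP(\fe)} \im(x_M) \;=\; r,
\]
which I expect to be the main obstacle. The plan is to prove it by repeating the argument of (1) in the dual picture: the image subbundle $\cI \subseteq M \otimes_\KK \cO_{\PP(\fe)}$ with fibres $\im(x_M)$ has rank $r$ and fits into a short exact sequence with $\cK$, so comparing ranks and Plücker degrees, together with the identification of $\bigcap_{[x]} \im(x_M)$ with the space of global sections of $\cI$ landing in a constant subspace of $M$, yields the identity.
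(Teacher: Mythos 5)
The core of your plan—translate everything into the splitting type of the relevant vector bundle on $\PP(\fe)\cong\PP^1$ and compare $H^0$'s—is indeed the right circle of ideas (and is essentially what underlies the cited results of Farnsteiner and Carlson--Friedlander--Suslin, whereas the paper itself simply quotes \cite[(6.2.8)]{Fa17}, \cite[(7.10)]{CFS11} for (1) and \cite[Thm.~A]{Fa17} for (2)). However, there are two genuine errors in the execution.

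First, the identification of the degree function is off. You take $\msdeg_M(\fe)$ to be the Pl\"ucker degree of the \emph{kernel} classifying map $[x]\mapsto\ker(x_M)$, i.e.\ $-\deg\cK=\sum_i a_i$. This is incompatible with the formula $\msdeg_M(\fe)=\dim_\KK M/\fK(M|_\fe)$ you are trying to prove. Write $\cQ=(M\otimes\cO)/\cK$; since $\cQ\cong\cI(1)$ where $\cI$ is the image subbundle, and $\cI\hookrightarrow M\otimes\cO$ while $\cQ$ is a quotient of $M\otimes\cO$, one gets $\cQ\cong\cO^{m_0}\oplus\cO(1)^{m_1}$ with $m_0+m_1=\rk(M)$. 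A short computation (the constant sections of $\cQ^\vee$ cut out $(M/\fK)^\ast$) gives $\dim_\KK M/\fK(M|_\fe)=h^0(\cQ^\vee)=m_0$, while the Pl\"ucker degree of your kernel map is $-\deg\cK=\deg\cQ=m_1$. These disagree whenever $m_0\ne m_1$. For instance, over $\fe$ with $p=3$ and $M=\Omega^2_{U_0(\fe)}(\KK)$ (so $\dim_\KK M=10$, $\rk(M)=6$, $\rk\cK=4$), one has $\dim_\KK M/\fK(M|_\fe)=1$ but $\deg\cQ=5$. The $1$-degree in \cite{Fa17} is in effect $m_0$ (the Pl\"ucker degree of the \emph{image} classifying map $[x]\mapsto\im(x_M)$), not $m_1$.

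Second, and independently, the "standard fact" invoked for part (1) is false as a fact about arbitrary subbundles $\cK\subseteq M\otimes\cO_{\PP^1}$: $\sum_i a_i$ is \emph{not} the codimension of $\sum_{[x]}\cK_{[x]}$ in $M$. What is true is that $\dim_\KK\sum_{[x]}\cK_{[x]}\le\rk(\cK)+\sum_i a_i$, with equality only under a nondegeneracy hypothesis; equating this with $\dim_\KK M-\sum_i a_i$ would force $\dim_\KK M=\rk(\cK)+2\sum_i a_i$, which fails already in the $\Omega^2(\KK)$ example above ($10\ne 4+2\cdot 5$). The equality $\msdeg_M(\fe)=\dim_\KK M/\fK(M|_\fe)$ is a genuine structural statement about the bundles that actually arise from constant-rank $U_0(\fe)$-modules (encapsulated by the constraint that the twists of $\cI$, equivalently of $\cQ$, lie in $\{0,1\}$), and it is exactly this structure that the references cited in the paper supply. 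Your part (2) argument is closer to the mark once the degree function is correctly identified: the identity $\dim_\KK M/\fK(M|_\fe)+\dim_\KK\bigcap_{[x]}\im(x_M)=\rk(M)$ does reduce to $h^0(\cQ^\vee)+h^0(\cI)=\rk(M)$, but proving this again requires the constraint $\cI\cong\cO(-1)^{m_0}\oplus\cO^{m_1}$ rather than a generic-$\PP^1$-family fact, so you should make that explicit.
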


\begin{proof} (1) This is a direct consequence of \cite[(6.2.8)]{Fa17}, observing \cite[(7.10)]{CFS11}.

(2) This follows from \cite[Thm.A]{Fa17}. \end{proof}

\bigskip

\begin{Proposition}\label{Syz3} Suppose that $\KK$ is algebraically closed and that $\EE(2,\fg)\!\ne\!\emptyset$. If $M \in \modd U_0(\fg)$ is endotrivial, then we have
\begin{equation*}
2p\msdeg_M(\fe)= \left\{
\begin{array}{ll}
(p\!-\!1)(\dim_\KK M\!-\!1\!-\!s_M(\fe)p) & \text{if}  \ \dim_\KK M\!\equiv\! 1~{\rm mod}(p),\\
(p\!-\!1)(\dim_\KK M\!+\!1)\!-\!(s_M(\fe)\!+\!1)p&\text{if} \ \dim_\KK M\!\equiv\!-\!1~{\rm mod}(p),\\\end{array}
\right. \end{equation*}
for all $\fe\in\EE(2,\fg)$. \end{Proposition}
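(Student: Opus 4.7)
The strategy is to reduce to the case $\fg=\fe$ with $M=\Omega^{s}_{U_0(\fe)}(\KK)$ for $s=s_M(\fe)$, and then to verify the formula directly on syzygies of the trivial module.

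First, all three quantities entering the formula depend on $M$ only through $M|_\fe$: $s_M(\fe)$ by definition, $\dim_\KK M=\dim_\KK M|_\fe$ trivially, and $\msdeg_M(\fe)=\dim_\KK M-\dim_\KK\fK(M|_\fe)$ by Proposition~\ref{Syz2}(1). We may therefore take $\fg=\fe$. Since $M$ is endotrivial over $U_0(\fe)$, one has $M\cong\Omega^s_{U_0(\fe)}(\KK)\oplus Q$ with $Q$ projective. To dispense with $Q$, observe that both sides of the claim are additive under adjoining a copy of $U_0(\fe)$: a grading argument using $U_0(\fe)\cong\KK[X_1,X_2]/(X_1^p,X_2^p)$ identifies $\fK(U_0(\fe))$ with the ideal generated by $\{x^{p-1}:x\in\fe\}=(U_0(\fe))_{p-1}$, namely $\bigoplus_{d\ge p-1}(U_0(\fe))_d$, of $\KK$-dimension $p(p+1)/2$. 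Hence $\msdeg_{U_0(\fe)}(\fe)=p(p-1)/2$, and adjoining $U_0(\fe)$ to $M$ increases both sides of the claimed identity by $p^2(p-1)$, while preserving $\dim_\KK M\pmod p$. We may therefore assume $M=\Omega^s_{U_0(\fe)}(\KK)$.

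Second, the formula for $M=\Omega^s_{U_0(\fe)}(\KK)$ is verified by direct calculation. The case $s=0$ is trivial. For $s\ge 1$, the explicit minimal projective resolution of $\KK$ over $U_0(\fe)$ (whose $n$-th term is free of rank $n+1$) realises $\Omega^s(\KK)$ as a submodule of a free module, from which $\dim_\KK\fK(\Omega^s(\KK))$ can be read off by tracking the kernels of multiplication by a generic $x\in\fe$ through the resolution; Proposition~\ref{Syz2}(1) together with~$(\star)$ then yields the claim in both congruence cases. For $s\le -1$, the duality $\Omega^s(\KK)^*\cong\Omega^{-s}(\KK)$ (valid stably, since $U_0(\fe)$ is symmetric), combined with Proposition~\ref{Syz2}(2), gives
\[
\msdeg_{\Omega^s(\KK)}(\fe)=\rk(\Omega^s(\KK))-\msdeg_{\Omega^{-s}(\KK)}(\fe),
\]
reducing to the $s\ge 1$ case once $\rk(\Omega^s(\KK))$ has been read off from the resolution.

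The main obstacle is the direct computation for $s\ge 1$: although the resolution is explicit, obtaining $\dim_\KK\fK(\Omega^s(\KK))$ requires careful bookkeeping of kernels of multiplication by generic $x\in\fe$ through each syzygy, and checking that their span stabilises to a subspace of the correct dimension independent of the chosen $x$---ultimately reflecting the constant Jordan type of the syzygies of the trivial module.
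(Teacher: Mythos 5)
Your reduction is mathematically sound and is essentially the same decomposition the paper uses. Passing to $\fg=\fe$, splitting $M|_\fe\cong\Omega^s_{U_0(\fe)}(\KK)\oplus(\text{free})$, and stripping off the free summand by computing $\msdeg_{U_0(\fe)}(\fe)=p(p-1)/2$ (your Hilbert--series argument, or equivalently the paper's observation that $U_0(\fe)$ is self-dual combined with Proposition~\ref{Syz2}(2)) is exactly the $M_1\oplus M_2$ split that appears in the printed proof, and your duality reduction for $s<0$ via Proposition~\ref{Syz2}(2) likewise mirrors the paper.

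The genuine gap is the point you yourself flag as ``the main obstacle'': the value of $\msdeg_{\Omega^s_{U_0(\fe)}(\KK)}(\fe)$ for $s\ge 1$ is never actually established. You assert that $\dim_\KK\fK(\Omega^s(\KK))$ ``can be read off by tracking the kernels of multiplication by a generic $x\in\fe$ through the resolution,'' but this bookkeeping is nontrivial: the generic kernel $\fK(N)=\sum_{x\in\PP(\fe)}\ker x_N$ is not visibly controlled by the free resolution of $\KK$ alone (one must sum over all $x$, not merely track a single generic one), and the dependence on the parity of $s$ is exactly where the content lies. The paper avoids this computation entirely by invoking \cite[(7.12)]{CFS11}, which gives the closed form $\dim_\KK\Omega^s(\KK)/\fK(\Omega^s(\KK))=s(p-1)(p-2)/4$ for even $s\ge 0$; combined with the $n_M(\fe)\cdot p(p-1)/2$ contribution from the free part and the duality trick for $s<0$, this yields the stated identity by direct arithmetic. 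Until you either carry out the kernel computation in detail or cite such a formula, the proof is incomplete at its central step.
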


\begin{proof} We only consider the case where the dimension of $M$ is congruent to one modulo $p$. The other case can be treated similarly. We write $\dim_\KK M\!=\!mp\!+\!1$ for some
integer $m \in\NN_0$.

Given $\fe \in \EE(2,\fg)$, the algebra $U_0(\fe)\cong \KK[X,Y]/(X^p,Y^p)$ is local, and there thus exists an integer $n_M(\fe)\in\NN_0$ such that
\[ M|_{U_0(\fe)}\cong\Omega_{U_0(\fe)}^{s_M(\fe)}(\KK)\oplus U_0(\fe)^{n_M(\fe)}.\]
We first assume that $s_M(\fe)\!\geq\!0$ and put $M_1\!:=\!\Omega_{U_0(\fe)}^{s_M(\fe)}(\KK)$ and $M_2\!:=\!U_0(\fe)^{n_M(\fe)}$. In view of ($\star$), $s_M(\fe)$ is even and direct computation shows that
\[ n_M(\fe) = \frac{\dim_\KK M\!-\!\dim_\KK M_1}{p^2} = \frac{2m\!-\!s_M(\fe)p}{2p}.\]
Since $U_0(\fe)^{n_M(e)}$ is self-dual, Proposition \ref{Syz2}(2), applied to $\deg_{M_2} : \EE(2,\fe) \lra \NN_0$, gives
\[ (\dagger)  \ \ \ \ \ \msdeg_{M_2}(\fe)= n_M(\fe)\frac{p(p\!-\!1)}{2} = \frac{(2m\!-\!s_M(\fe)p)(p\!-\!1)}{4}.\]
Observing \cite[(7.12)]{CFS11}, we arrive at
\[ (\dagger\dagger) \ \ \ \ \ \msdeg_{M_1}(\fe) = \dim_\KK M_1/\fK(M_1) = \frac{s_M(\fe)(p\!-\!1)(p\!-\!2)}{4}.\]
Now ($\dagger$) and ($\dagger\dagger$) yield the desired formula.

If $s_M(\fe)\!<\!0$, we consider the endotrivial $U_0(\fg)$-module $M^\ast$. Since $(\Omega_{U_0(\fe)}^{s_M(\fe)}(\KK))^\ast\cong\Omega_{U_0(\fe)}^{-s_M(\fe)}(\KK)$, we obtain $s_{M^\ast}(\fe)\!=\!-s_M(\fe)$ and
\cite[(6.9)]{CFP08} yields $\rk(M)\!=\!\frac{p\!-\!1}{p}(\dim_\KK M\!-\!1)$. The result now follows from Proposition \ref{Syz2}(2). \end{proof}

\bigskip

\begin{Corollary}\label{Syz4} Suppose that $\KK$ is algebraically closed and that $\EE(2,\fg)\!\neq\!\emptyset$. Let $M$ be an endotrivial $U_0(\fg)$-module. If $\EE(2,\fg)$ is connected, then $s_M$ is constant.
\end{Corollary}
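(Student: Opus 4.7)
The plan is to reduce the constancy of $s_M$ to the local constancy of the degree function $\msdeg_M$ via Proposition~\ref{Syz3}, and then invoke connectedness of $\EE(2,\fg)$.

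First I would verify that the hypothesis of Proposition~\ref{Syz3} applies, i.e.\ that $\dim_\KK M \equiv \pm 1 \pmod{p}$. Since $\EE(2,\fg)$ is non-empty, pick any $\fe \in \EE(2,\fg)$. The restriction $M|_\fe$ is endotrivial, so by the dimension formula $(\star)$ and the fact that projective $U_0(\fe)$-modules have dimension divisible by $p^2$, the congruence $\dim_\KK M \equiv \pm 1 \pmod{p}$ holds. Hence Proposition~\ref{Syz3} applies, and solving either of its formulas expresses $s_M(\fe)$ as an affine-linear function of $\msdeg_M(\fe)$ whose coefficients depend only on $p$ and $\dim_\KK M$. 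Consequently, it suffices to show that $\msdeg_M$ is constant on $\EE(2,\fg)$.

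Next I would invoke the construction of $\msdeg_M$ from \cite[\S4.1]{Fa17}. Since $M$ is endotrivial, $M \in \CR(\fg)$ by \cite{CFP08}, and the rule $\fe \mapsto \fK(M|_\fe) \subseteq M$ defines a morphism from $\EE(2,\fg)$ into a disjoint union of Grassmannians of subspaces of $M$, stratified by $\dim_\KK \fK(M|_\fe)$. On each connected component of $\EE(2,\fg)$ this morphism factors through a single Grassmannian $\Gr_k(M)$, so $\fe \mapsto \dim_\KK \fK(M|_\fe)$ is locally constant on $\EE(2,\fg)$. By Proposition~\ref{Syz2}(1), the function $\msdeg_M(\fe) = \dim_\KK M - \dim_\KK \fK(M|_\fe)$ is therefore locally constant as well.

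Finally, the connectedness of $\EE(2,\fg)$ promotes local constancy to global constancy of $\msdeg_M$, and the affine relation from Proposition~\ref{Syz3} then yields the constancy of $s_M$. The main technical point is the local constancy of $\msdeg_M$; this hinges on its realization as (the codimension of) the image of the Grassmannian-valued morphism of \cite{Fa17}, without which the integer-valued function $s_M$ might a priori jump across $\EE(2,\fg)$.
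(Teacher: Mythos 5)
You correctly reduce the constancy of $s_M$ to that of $\msdeg_M$ via Proposition~\ref{Syz3}, and the dimension-congruence check is fine; this is the structure of the paper's proof, which simply cites \cite[(4.1.1)(3)]{CF21} for the constancy of $\msdeg_M$ on connected components of $\EE(2,\fg)$. However, your justification of that step is circular. Asserting that $\fe \mapsto \fK(M|_\fe)$ ``defines a morphism from $\EE(2,\fg)$ into a disjoint union of Grassmannians of subspaces of $M$, stratified by $\dim_\KK\fK(M|_\fe)$'' already presupposes that $k := \dim_\KK\fK(M|_\fe)$ is locally constant: a priori this is only a set-theoretic assignment, and claiming it is a morphism to a disjoint union indexed by $k$, each piece of which is connected, is exactly the statement you are trying to establish.

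In \cite[\S 4.1]{Fa17} the degree $\msdeg_M(\fe)$ is not defined via $\fK(M|_\fe)$. For $M$ of constant rank the rule $[x] \mapsto \ker x_M$ gives a morphism $\PP(V(\fg)) \to \Gr_d(M)$ to a \emph{single} Grassmannian (with $d = \dim_\KK M - \rk(M)$ fixed), and $\msdeg_M(\fe)$ is the degree of its restriction $\PP(\fe) \cong \PP^1 \to \Gr_d(M) \hookrightarrow \PP(\bigwedge^d M)$. Local constancy in $\fe$ then follows because degrees of rational curves in a fixed projective space are locally constant in families; the identity $\msdeg_M(\fe) = \dim_\KK M/\fK(M|_\fe)$ of Proposition~\ref{Syz2}(1) is a theorem proved afterward, not the definition. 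Replacing your Grassmannian heuristic with the citation \cite[(4.1.1)(3)]{CF21} makes the argument complete and identical to the paper's.
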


\begin{proof} This is a direct consequence of Proposition \ref{Syz3} and \cite[(4.1.1)(3)]{CF21}. \end{proof}

\bigskip

\section{Endotrivial modules for supersolvable restricted Lie algebras} \label{S:Endo}
A Lie algebra $\fg$ is called {\it supersolvable}, provided its derived algebra $[\fg,\fg]$ is nilpotent. We say that $(\fg,[p])$ is a \textit{torus}, if
\begin{enumerate}
\item[(a)] $\fg$ is abelian, and
\item[(b)] every $x \in \fg$ is semisimple, that is, $x \in (\KK x^{[p]})_p$. \end{enumerate}
Restricted enveloping algebras of tori are semisimple \cite[Thm.]{Ho54}, so that every nonzero $U_0(\fg)$-module is endotrivial in that case.

We denote by $T(\fg)$ the unique maximal torus of $C(\fg)$. For convenience, we record the following result:

\bigskip

\begin{Lemma} \label{Endo1} Suppose that $\KK$ is algebraically closed and let $\pi : \fg \lra \fg/T(\fg)$ be the canonical projection. The following statements hold:
\begin{enumerate}
\item $\pi$ induces a bijective morphism $\pi : V(\fg) \lra V(\fg/T(\fg))$ of affine varieties.
\item The restriction of the induced homomorphism $\pi : U_0(\fg) \lra U_0(\fg/T(\fg))$ provides an isomorphism $\cB_0(\fg) \lra \cB_0(\fg/T(\fg))$ between the principal blocks of $U_0(\fg)$ and $U_0(\fg/T(\fg))$.
\end{enumerate} \end{Lemma}

\begin{proof} (1) We only verify the surjectivity. If $a\!=\!\pi(x) \in V(\fg/T(\fg))$, then $x^{[p]}\in T(\fg)$. As $T(\fg)$ is a torus, we can find $t \in T(\fg)$ such that $x^{[p]}\!=\!t^{[p]}$. Consequently, $x\!-\!t \in V(\fg)$, while
$a\!=\!\pi(x\!-\!t)$.

(2) This follows from \cite[(1.1)]{Fa06}.  \end{proof}

\bigskip
\noindent
\textbf{Proof of Theorem B.} We first prove the following claim:

\medskip
(i) \textit{Suppose that $\KK$ is algebraically closed. Let $M$ be an indecomposable endotrivial $U_0(\fg)$-module. Then there are $\lambda \in X(\fg)$ and $n \in \ZZ$ such that $M \cong
\Omega^n_{U_0(\fg)}(\KK_\lambda)$.}

\smallskip
\noindent
We proceed in several steps, noting that (i) trivially holds in case $(\fg,[p])$ is a torus.

(1) \textit{Suppose that $(\fg,[p])$ is unipotent}. Theorem A implies that $\EE(2,\fg)$ is connected. If $\dim V(\fg)\!\ge\!2$, then $\EE(2,\fg)_{z_0}\!\ne\!\emptyset$ for $z_0\in V(C(\fg))\!\smallsetminus\!\{0\}$ and Corollary
\ref{Syz4} shows that the syzygy function $s_M : \EE(2,\fg) \lra \ZZ$ is constant. Thanks to Proposition \ref{Syz1}, we have $M \cong \Omega^n_{U_0(\fg)}(\KK)$ for some $n \in \ZZ$. Alternatively, $\EE(2,\fg)\!=
\!\emptyset$ and either $\fg\!=\!(0)$ is a torus, or Proposition \ref{Syz1} applies.

(2) \textit{Suppose that $(\fg,[p])$ is trigonalizable, that is, every simple $U_0(\fg)$-module is one-dimensional}. By general theory, we have
\[ \fg \cong \ft\ltimes \fu,\]
where $\ft$ is a torus and $\fu$ is unipotent. Since $M|_{U_0(\fu)}$ is endotrivial, (1) provides an isomorphism
\[ M|_{U_0(\fu)} \cong \Omega^n_{U_0(\fu)}(\KK)\oplus U_0(\fu)^\ell\]
for some $n \in \ZZ$ and $\ell \in \NN_0$. Considering $N\!:=\!\Omega^{-n}_{U_0(\fg)}(M)$, we thus arrive at
\[ N|_{U_0(\fu)} \cong \KK\!\oplus U_0(\fu)^{\ell'} \ \ \ \ \ (\ell' \in \NN_0).\]
As $N$ is indecomposable, \cite[(2.1.2)]{Fa09} ensures that $N|_{U_0(\fu)}$ is indecomposable. Consequently, $N|_{U_0(\fu)} \cong \KK$ is one-dimensional, so that $N \cong \KK_\lambda$ for some $\lambda \in X(\fg)$.
As a result, $M \cong \Omega^n_{U_0(\fg)}(N) \cong  \Omega^n_{U_0(\fg)}(\KK_\lambda)$.

(3) \textit{Let $(\fg,[p])$ be supersolvable}. Then the dimension of each simple $U_0(\fg)$-modules is a power of $p$, cf.\ for instance  \cite[(V.8.5)]{SF}. We consider the block $\cB \subseteq U_0(\fg)$ associated to $M$.
Since $\dim_\KK M\!\equiv\!1,-1 \modd(p)$, it follows that $M$ has a one-dimensional composition factor, $\KK_\lambda$ say, for some $\lambda \in X(\fg)$. We conclude that the trivial module $\KK$ is a composition factor
of the indecomposable module $N\!:=\!M\!\otimes_\KK\!\KK_{-\lambda}$, so that $N$ belongs to the principal block $\cB_0(\fg)$ of $U_0(\fg)$. Lemma \ref{Endo1}(2) now provides an indecomposable $U_0(\fg/T(\fg))$-module
$M'$ belonging to the principal block $\cB_0(\fg/T(\fg))$ such that $N \cong \pi^\ast(M')$ is the pull-back of $M'$ along the canonical projection $\pi : U_0(\fg) \lra U_0(\fg/T(\fg))$. In view of Lemma \ref{Endo1}(1), $M'$ has
constant Jordan type $\Jt(M')\!=\!\Jt(N)\!=\!\Jt(M)\!=\![i]\!\oplus\!a_p(M)[p]$ for $i \in \{1,p\!-\!1\}$, and \cite[(5.6)]{CFP08} ensures that $M'$ is endotrivial. According to \cite[(2.3)]{FV00}, the restricted Lie algebra $\fg/T(\fg)$ is
trigonalizable. Thus, (2) shows that $M' \cong \Omega^m_{U_0(\fg/T(\fg))}(\KK_\zeta)$ for some $\zeta \in X(\fg/T(\fg))$ and $m \in \ZZ$. Applying Lemma \ref{Endo1}(2) again, we conclude that the $U_0(\fg)$-module $N$
satisfies
\[ N \cong \Omega^m_{U_0(\fg)}(\KK_{\zeta\circ \pi}).\]
As a result, $M \cong \Omega^m_{U_0(\fg)}(\KK_{(\zeta\circ\pi)+\lambda})$. This concludes the proof of (i). \hfill $\diamond$

\smallskip
\noindent
Let $M$ be indecomposable and endotrivial. We let $K$ be an algebraic closure of $\KK$ and consider the supersolvable restricted Lie algebra $\fg_K\!:=\!\fg\!\otimes_\KK\!K$, whose restricted enveloping
algebra is isomorphic to $U_0(\fg)\!\otimes_\KK\! K$. As $M_K\!:=\!M\!\otimes_\KK\!K$ is an endotrivial $U_0(\fg_K)$-module, (i) now implies that
\[ {\rm (ii)} \ \ \ \ \ \ M_K \cong \Omega_{U_0(\fg_K)}^n(K_\mu)\!\oplus\!({\rm proj.})\]
for some $n \in \ZZ$ and $\mu \in X(\fg_K)$.

Since $\KK$ is perfect and $\fg$ is finite dimensional there exists an intermediate field $\KK \subseteq L \subseteq K$ such that $L\!:\!\KK$ is a finite Galois extension and $\mu(U_0(\fg)\!\otimes 1) \subseteq L$.
Let $\sigma$ be an element of the Galois group $G(L\!:\!\KK)$ of $L\!:\!\KK$. We extend $\sigma$ to an automorphism $\sigma$ of $K$ and note that $\mu\circ (\id_{U_0(\fg)}\otimes \sigma) = \sigma\circ \mu$.  It
follows that
\[ \mu(U_0(\fg)\otimes 1) \subseteq L^{G(L:\KK)}\!=\!\KK.\]
Defining $\lambda \in X(\fg)$ via
\[ \lambda(u) := \mu(u\otimes 1) \ \ \ \ \ \forall \ u \in U_0(\fg)\]
we obtain $K_\mu \cong (\KK_\lambda)_K$. If $(\fg,[p])$ is a torus, then $n\!=\!0$, and (ii) shows that $(\KK_\lambda)_K$ is a direct summand of $M_K$. By the Noether-Deuring Theorem  \cite[(3.8.4(ii))]{EG},
$\KK_\lambda$ is a direct summand of $M$, so that $M$ being indecomposable yields $M\cong \KK_\lambda$. Alternatively, we obtain $M_K\! \oplus\!({\rm proj.}) \cong (\Omega^n_{U_0(\fg)}(\KK_\lambda))_K\!\oplus
\!({\rm proj.})$, with $\KK_\lambda$ being non-projective.\footnote{If $\KK_\lambda$ is projective, so is the trivial module $\KK$ and the Hopf algebra $U_0(\fg)$ is semisimple. As $\KK$ is perfect, this implies that $(\fg,[p])$
is a torus, see \cite[Thm.]{Ho54} and \cite[(2.3.7)]{SF}.} Hence there is a projective $U_0(\fg_K)$-module $Q$, and a projective $U_0(\fg)$-module $P$ such that
\[ (M\!\oplus\! P)_K \cong M_K\!\oplus\! P_K \cong \Omega^n_{U_0(\fg)}(\KK_\lambda)_K\!\oplus\!Q.\]
The Noether-Deuring Theorem  \cite[(3.8.4(ii))]{EG} implies that $\Omega^n_{U_0(\fg)}(\KK_\lambda)$ is a direct summand of $M\!\oplus\!P$. Since $M$ and $\Omega^n_{U_0(\fg)}(\KK_\lambda)$ are non-projective
indecomposable, the Theorem of Krull-Remak-Schmidt yields $M \cong \Omega^n_{U_0(\fg)}(\KK_\lambda)$. This implies the desired result. \hfill $\square$

\bigskip

\begin{Remarks} (1) If $\fu$ is unipotent, then the proof above works for every field $\KK$.

(2) Suppose that $\dim V(\fg_K)\!\ge\!2$. Then $n$ and $\lambda$ are uniquely determined. As a result, the subgroup of the auto-equivalences of the stable category $\underline{\modd}\,U_0(\fg)$ defined by endotrivial
modules is isomorphic to $\ZZ\!\oplus\! X(\fg)$ (so that $X(\fg)$ is the torsion group). \end{Remarks}

\bigskip

\begin{Corollary} \label{Endo2} Let $\KK$ be algebraically closed and suppose that $G$ is a connected, solvable algebraic group. If $M \in \modd U_0(\Lie(G))$ is endotrivial, then there exists $\lambda
\in X(\Lie(G))$ and $n \in \ZZ$ such that
\[ M \cong \Omega^n_{U_0(\Lie(G))}(\KK_\lambda)\!\oplus\!({\rm proj.}).\] \end{Corollary}

\begin{proof} By the Lie-Kolchin Theorem, $\Lie(G)$ is trigonalizable. Now apply Theorem B. \end{proof}

\bigskip
\noindent
In general, syzygy functions are not necessarily constant. Letting $\{e,h,f\}$ be the standard basis of $\mathfrak{sl}(2)$, we consider the central extension $\mathfrak{sl}(2)_s\!:=\!\mathfrak{sl}(2)\!\oplus\!\KK c_0$
of $\mathfrak{sl}(2)$ by the one-dimensional elementary abelian ideal $\KK c_0$. By definition, the extension splits as an extension of ordinary Lie algebras.
The $p$-map is given by
\[(x,\alpha c_0)^{[p]}=(x^{[p]},\psi(x)c_0),\]
where the $p$-semilinear map $\psi:\mathfrak{sl}(2)\rightarrow \KK$ satisfies $\psi(e)=0=\psi(f)$ and $\psi(h)=1$. This readily implies
\[ V(\fsl(2)_s) = \KK e\!\oplus\!\KK c_0 \cup \KK f\!\oplus\!\KK c_0 \ \ \text{as well as} \ \  \EE(2,\fsl(2)_s) = \{ \fe_e, \fe_f\},\]
where $\fe_x\!:=\!\KK x\!\oplus\!\KK c_0$ for $x \in \{e,f\}$. We put $\fb_s\!:=\!\KK (h\!+\!c_0)\!\oplus\! \KK e$ and consider the {\it baby Verma module}
\[Z(0) :=U_0(\mathfrak{sl}(2)_s)\!\otimes_{U_0(\fb_s)}\!\KK\]
with $h\!+\!c_0$ acting trivially on $\KK$.

Let $\Rad(Z(0))$ be the radical of $Z(0)$. In view of \cite[(4.3.4)(1)]{CF21} and \cite[(5.6)]{CFP08}, the indecomposable module $\Rad(Z(0))$ is endotrivial. Moreover, we have
$s_{\Rad(Z(0))}(\fe_f)\!=\!1,~s_{\Rad(Z(0))}(\fe_e)\!=\!-1$. Since $\Rad(Z(0))$ is endotrivial, \cite[(7.2)]{FS02} shows that the stable Auslander-Reiten component containing $\Rad(Z(0))$
is of type $\ZZ[A^\infty_\infty]$. A consecutive application of \cite[(3.1.2)]{Fa14} and \cite[(5.6)]{CFP08} now implies that every module belonging to this component is endotrivial. In contrast to Theorem B, the class of
indecomposable endotrivial $U_0(\fsl(2)_s)$-modules is not the union of finitely many $\Omega$-orbits.

\bigskip

\section{Modules with one non-projective block}
Let $\KK$ be an algebraically closed field of characteristic $p\!\ge\!3$, $(\fg,[p])$ be a restricted Lie algebra over $\KK$. Recall that $\CJT(\fg) \subseteq \modd U_0(\fg)$ denotes the full subcategory of modules of
constant Jordan type.

We begin with the following direct consequence of \cite{Be10}:

\bigskip

\begin{Lemma} \label{NP1} Let $M \in \CJT(\fg)$ be of constant Jordan type $\Jt(M)\!=\![i]\!\oplus\!a_p(M)[p]$ for some $i \in \{1,\ldots,p\!-\!1\}$. If $\EE(2,\fg)\!\ne\!\emptyset$, then $i \in \{1,p\!-\!1\}$ and $M$ is
endotrivial. \end{Lemma}

\begin{proof} Let $\fe \in \EE(2,\fg)$. Then $N\!:=\!M|_{U_0(\fe)}$ has constant Jordan type $\Jt(N)\!=\!\Jt(M)$. Let $\alpha : \KK[T]/(T^p) \lra U_0(\fe)$ be a homomorphism of associative $\KK$-algebras such that
the pull-back $\alpha^\ast(U_0(\fe)) \in \modd \KK[T]/(T^p)$ is projective. Owing to \cite[(3.8)]{FP05}, we have $\alpha^\ast(N) \cong [i]\!\oplus\! a_p(M)[p]$.

The associative algebra $U_0(\fe)$ is isomorphic to the group algebra $\KK E_2$ of the elementary abelian $p$-group $E_2\!=\!\ZZ/(p)^2$ of rank $2$. Hence there is $N' \in \modd \KK E_2$ such that
$\beta^\ast(N') \cong [i]\!\oplus\!a_p(M)[p]$ for every $\beta : \KK[T]/(T^p) \lra \KK E_2$ with $\beta^\ast(\KK E_2)$ being projective. Thus, $N'$ has constant Jordan type $\Jt(N')\!=\!\Jt(M)$ and \cite[(1.1)]{Be10} yields
$i \in \{1,p\!-\!1\}$. The second assertion follows from \cite[(5.6)]{CFP08}. \end{proof}

\bigskip
\noindent
We denote by $\fb_{\fsl(2)}$ and $\fu_{\fsl(2)}$ the Borel subalgebra of upper triangular matrices of $\fsl(2)$ and its unipotent radical, respectively. We say that $\fg$ is {\it generically toral}, provided every torus
$\ft \subseteq \fg$ of maximal dimension is self-centralizing (see \cite[\S1]{CF19} for more details).

\bigskip

\begin{Theorem} \label{NP2} Let $(\fg,[p])$ be a restricted Lie algebra such that $V(\fg)\!\ne\!\{0\}$. If $M \in \CJT(\fg)$ is indecomposable of constant Jordan type $\Jt(M)\!=\![i]\!\oplus\!a_p(M)[p]$ for some
$i \in \{2,\ldots,p\!-\!2\}$, then one of the following statements holds:
\begin{enumerate}
\item[(a)] $\fg/C(\fg) \cong \fsl(2)$, with $C(\fg)$ being a torus, or
\item[(b)] $\fg/T(\fg) \cong \fb_{\fsl(2)}, \fu_{\fsl(2)}$. \end{enumerate} \end{Theorem}

\begin{proof} We necessarily have $p\!\ge\!5$, and it follows from Lemma \ref{NP1} that $\EE(2,\fg)\!=\!\emptyset$. As $V(\fg)\!\ne\!\{0\}$, the restricted Lie algebra $(\fg,[p])$ has $p$-rank $\rk_p(\fg)\!=\!1$, cf.\ \cite{CF19}.
Hence \cite[(4.2.4)]{CF19} yields:

\smallskip

($\star$) \ \textit{If $\fg$ is generically toral, then $\fg/C(\fg) \cong \fsl(2), \fb_{\fsl(2)}^{-1}, \fb_{\fsl(2)}$, where}
\[ \fb_{\fsl(2)}^{-1}\!:=\!\KK t\!\oplus\!\KK x\!\oplus\! \KK y \ \ ; \ \ [t,x]\!=\!x\ , \  [t,y]\!=\!-y \ , \  [x,y]\!=\!0 \ ; \ t^{[p]}\!=\!t \ , \ x^{[p]}\!=\!0\!=\!y^{[p]}.\]

\smallskip
\noindent
Let $\ft \subseteq\fg$ be a torus of maximal dimension, $r(\fg)$ be the number of roots of $\fg$ relative to $\ft$.  We proceed in several steps.

\smallskip
\noindent
(i) {\it Suppose that $\dim V(\fg)\!\ge\!2$}.

If $V(C(\fg))\!\ne\!\{0\}$, then $\EE(2,\fg)\!\ne\!\emptyset$, a contradiction. Consequently, $C(\fg)\!=\!T(\fg)$ is a torus.

If $r(\fg)\!\ge\!2$, then \cite[(3.1)]{CF19} ensures that $\fg$ is generically toral and ($\star$) yields $\fg/C(\fg) \cong \fsl(2), \fb_{\fsl(2)}^{-1}$. If $\fg/C(\fg)\cong  \fb_{\fsl(2)}^{-1}$, then $\fg$ is solvable and \cite[(5.8.5)]{SF}
shows that every simple $U_0(\fg)$-module has dimension a power of $p$, while $\dim_\KK M\!\equiv\! i \ \modd(p)$. Consequently, $M$ possesses a one-dimensional composition factor $\KK_\lambda$ for some
$\lambda \in X(\fg)$. We conclude that $N\!:=\!M\!\otimes_\KK\!\KK_{-\lambda}$ is indecomposable and has constant Jordan type $\Jt(N)\!=\!\Jt(M)$. Since the trivial module $\KK$ is a composition factor of $N$,
$N$ belongs to the principal block $\cB_0(\fg)$.  It now follows from Lemma \ref{Endo1} that there is an indecomposable $U_0(\fg/C(\fg))$-module $M'$ of constant Jordan type $\Jt(M')\!=\!\Jt(M)$. Since
$\EE(2,\fb_{\fsl(2)}^{-1})\!\ne\!\emptyset$, this contradicts Lemma \ref{NP1}. We obtain $\fg/C(\fg) \cong \fsl(2)$, so that (a) holds.

If $r(\fg)\!=\!1$, there is a root $\alpha \in \ft^\ast\!\smallsetminus\!\{0\}$ such that
\[ \fg = \fg_0\!\oplus \fg_\alpha\]
with $\fg_0$ being a Cartan subalgebra of $\fg$. Thus, $\fg_\alpha$ is an abelian ideal of $\fg$, whence $\fg_\alpha^{[p]} \subseteq C(\fg) \subseteq \ft$. It follows that $\fs\!:=\!\ft\!\oplus\!\fg_{\alpha}$ is a $p$-subalgebra of
$\fg$ such that $\rk_p(\fs)\!=\!1$. In view of $\fs$ being generically toral, ($\star$) yields $\fs/C(\fs) \cong \fb_{\fsl(2)}$, so that $\dim_\KK \fg_\alpha\!=\!1$. Let $\fg_\alpha\!=\!\KK x$.
Since $x^{[p]}$ belongs to the toral $p$-subalgebra $C(\fg)$, there is $c \in C(\fg) \subseteq \fg_0$ such that $x\!-\!c \in V(\fg)$. Let $y \in V(\fg_0)$. Then $y$ acts nilpotently on $\fg_\alpha$, whence $[y,x|\!=\!0$.
Consequently, the $p$-subalgebra $\fe\!:=\!\KK(x\!-\!c)\!\oplus\!\KK y$ is elementary abelian. The $U_0(\fs)$-module $M|_{U_0(\fs)}$ has constant Jordan type $\Jt(M|_{U_0(\fs)})\!=\!\Jt(M)$ and thus possesses an
indecomposable constituent $N$ of constant Jordan type $\Jt(N)\!=\![i]\!\oplus\!a_p(N)[p]$. Lemma \ref{NP1} yields $\EE(2,\fs)\!=\!\emptyset$, so that $y\!=\!0$. It follows that $V(\fg_0)\!=\!\{0\}$, so that $\fg_0\!=\!\ft$ is a
torus, cf.\ \cite[(2.3.7),(2.3.10)]{SF}. As a result, $\fg$ is generically toral and ($\star$) yields $\fg/C(\fg)\cong\!\fb_{\fsl(2)}$. Now Lemma \ref{Endo1}(1) implies $\dim V(\fg)\!\le\!1$, a contradiction.

We finally assume that $r(\fg)\!=\!0$, so that $\fg$ is nilpotent with toral center $C(\fg)$. Hence $\fu\!:=\!\fg/C(\fg)$ is unipotent and Lemma \ref{Endo1}(1) ensures that $\dim V(\fu)\!\ge\!2$. Now $\fu$ being unipotent entails
$\EE(2,\fu)\!\ne\!\emptyset$. As above, there exists a $U_0(\fu)$-module $M'$ of constant Jordan type $\Jt(M')\!=\!\Jt(M)$, which contradicts Lemma \ref{NP1}. \hfill $\diamond$

\smallskip
\noindent
(ii) {\it Suppose that $\dim V(\fg)\!=\!1$}.

By \cite[(4.3)]{Fa95}, we have
\[ \fg/T(\fg)\cong \KK t\!\ltimes\!(\KK x)_p\]
for some toral element $t \in \fg/T(\fg)$ (i.e., $t^{[p]}\!=\!t)$, and some $p$-nilpotent element $x \in \fg/T(\fg)$. We first assume that $T(\fg)\!=\!(0)$. Then $\fg$ is trigonalizable, and as $x$ is $p$-nilpotent and $V(\fg)\!\ne\!\{0\}$,
there is $n \in \NN_0$ such that $x^{[p]^n}\!\ne\!0\!=\!x^{[p]^{n+1}}$. Writing $\fn\!:=\!(\KK x)_p$, we observe that the two-sided ideal $J\!:=\!U_0(\fg)\fn$ is nilpotent, while $U_0(\fg)/J\cong U_0(\KK \ft)$ is semisimple. Hence
$J\!=\! U_0(\fg)x\!=\!x U_0(\fg)$ is the Jacobson radical of $U_0(\fg)$.

Let $M \in \modd U_0(\fg)$ be indecomposable. Since $U_0(\fg)$ is a Nakayama algebra (cf.\ \cite[(3.2)]{Fa95}), there is a principal indecomposable module $P$ and $\ell \in \{1,\ldots, p^{n+1}\}$ such that $M \cong P/J^\ell
P$, cf.\ \cite[(5.3.5)]{ASS06}. Moreover, there exists $\lambda \in X(\fg)$ such that $P\cong U_0(\fg)\!\otimes_{U_0(\KK t)}\!\KK_\lambda$, see for instance \cite[Prop.1]{Fe99}. Consequently,
\[ M|_{U_0(\fn)} \cong U_0(\fn)/x^\ell U_0(\fn).\]
We write $\ell\!=\!sp^n\!+\!j$ for $j \in \{0,\ldots, p^n\!-\!1\}$ and $s \in \{0,\ldots,p\}$. Setting $y\!:=\!x^{[p]^n}$, we have $V(\fg)\!=\!V(\fn)\!=\!\KK y$. If $s\!=\!0$, then $y.M\!=\!(0)$, so that $M|_{U_0(\KK y)} \cong \ell [1]$. We
assume that $s\!\ge\!1$, whence $\ell\!\ge\!p^n$. For every  $q \in \{0,\ldots, p^n\!-\!1\}$, we put $I_q\!:=\!\{ i \in \{0,\ldots, \ell\!-\!1\} \   ;   \   i \! \equiv \! q \ \modd (p^n)\}$. Let $z\!:=x\!+\!x^\ell U_0(\fn) \in M$.  The space
\[ M_q \!:= \!\bigoplus_{i \in I_q} \KK z^i\]
is a cyclic $U_0(\KK y)$-submodule of $M$ (with generator $z^q$) such that $M_q\cong [s]$ for $q\!=\!0$ and $q \in \{j\!+\!1, \ldots, p^n\!-\!1\}$, while $M_q\cong [s\!+\!1]$ for $q \in \{1,\ldots, j\}$ (in that case, we necessarily
have $s\!\le\!p\!-\!1$). Consequently,
\[ \Jt(M) = \Jt(M|_{U_0(\fn)}) = [M|_{U_0(\KK y)}] = [\bigoplus_{q=0}^{p^n-1}M_q] = (p^n\!-\!j)[s]\!\oplus\!j[s\!+\!1].\]
Accordingly, we have $\Jt(M)\!=\![i]\!\oplus\!a_p(M)[p]$ for some $2\!\le\!i\!\le\!p\!-\!2$ only if $n\!=\!0$. Consequently, $\fg \cong \fb_{\fsl(2)}$ or $\fg \cong \KK y \cong \fu_{\fsl(2)}$.

In the general case, we consider $\fg'\!:=\!\fg/T(\fg)$. If $M \in \modd U_0(\fg)$ is indecomposable of constant Jordan type $\Jt(M)\!=\![i]\!\oplus\!a_p(M)[p]$ for $2\!\le\!i\!\le\!p\!-\!2$, Lemma \ref{Endo1}
provides an indecomposable module $M' \in \CJT(\fg')$ such that $\Jt(M')\!=\!\Jt(M)$. By what we have seen, we obtain $\fg' \cong \fb_{\fsl(2)}, \fu_{\fsl(2)}$. \hfill $\diamond$

Since $V(\fg)\!\ne\!\{0\}$ is conical, our result follows from (i) and (ii).  \end{proof}

\bigskip

\begin{Remarks} (1) Suppose that $\fg/T(\fg) \cong \fb_{\fsl(2)},\fu_{\fsl(2)}$ and let $\ft$ be a maximal torus of $\fg$. Given $\lambda \in X(\fg/T(\fg))$ and $i \in \{1,\ldots, p\!-\!1\}$, we consider the principal indecomposable
module $P(\lambda)\!:=\!U_0(\fg)\!\otimes_{U_0(\ft)}\!\KK_{\lambda\circ \pi}$ as well as $M_i(\lambda)\!:=\!P(\lambda)/J^iP(\lambda)$. The above proof then shows that $\{M_i(\lambda) \ ; \ \lambda \in X(\fg/T(\fg))\}$ is the set
of isoclasses of the indecomposable $U_0(\fg)$-modules of constant Jordan type with $[i]$ as only non-projective block.

(2) Suppose that $\fg/C(\fg) \cong \fsl(2)$, with $C(\fg)$ being a torus. We let $V(d) \in \modd U_0(\fsl(2))$ be the Weyl module with highest weight $d \!\not \equiv\! -1 \modd (p)$. If $M \in \CJT(\fg)$ is
indecomposable of constant Jordan type with only non-projective block $[i]$, then $\dim_\KK M\!=\!sp\!+\!i$, and there exists a character $\lambda \in X(\fg)$ such that
$M \cong \pi^\ast(V(\dim_\KK M\!-\!1))\!\otimes_\KK \!\KK_\lambda, \pi^\ast(V(\dim_\KK M\!-\!1)^\ast)\!\otimes_\KK \!\KK_\lambda$ is the pull-back of a Weyl module or its dual along the canonical projection
$\pi : U_0(\fg) \lra U_0(\fg/C(\fg))$.

(3) Let $\cG$ be a solvable infinitesimal group scheme with maximal multiplicative normal subgroup $\cM(\cG)$. Suppose there is a $\cG$-module $M$ of constant Jordan type $\Jt(M)\!=\![i]\!\oplus\!a_p(M)[p]$ for some
$i \in \{2,\ldots,p\!-\!2\}$. In view of \cite[\S1]{SFB97}, \cite[\S1,\S2]{FV99} and \cite[\S5]{CF19}, one can show that $\cG/\cM(\cG) \cong \GG_{a(1)}\!\rtimes\!\GG_{m(s)}$ for some $s\!\ge\!0$. Moreover, in each of these cases,
such modules do exist. The determination of the endotrivial modules, however, would necessitate the extension of our geometric techniques to the context of unipotent infinitesimal group schemes. \end{Remarks}

\bigskip

\bigskip

\begin{center}
\textbf{Acknowledgement}
\end{center}
The first named author would like to thank Professor Jon Carlson for helpful discussions.

\bigskip

\bigskip

\end{document}